\title{Hopf algebroids associated to Jacobi algebras}
 \date{}
 \author{Ana Rovi \footnote{Research funded by an EPSRC DTA grant}\\
School of Mathematics and Statistics\\ 
University of Glasgow\\  
Glasgow G12 8QW, UK \\
\begin{small}{ (\href{a.rovi.1@research.gla.ac.uk}{a.rovi.1@research.gla.ac.uk}) }\end{small}
}
\numberwithin{equation}{section}
\newtheorem{theorem}{Theorem}[section]
\newtheorem{lemma}[theorem]{Lemma}
\newtheorem{remark}[theorem]{Remark}
\newtheorem{proposition}[theorem]{Proposition}
\newtheorem{definition}[theorem]{Definition}
\newtheorem{corollary}[theorem]{Corollary}
\newcommand{\dr}{\delta^\nabla_r}
\newcommand{\Cr}{\mathcal C^\nabla_r}
\newcommand{\dt}{\mathfrak D}
\begin{document}

\setcounter{secnumdepth}{3}
\setcounter{tocdepth}{3}

\maketitle

\begin{center}
\begin{footnotesize}
{\it For my father}
\end{footnotesize}
\end{center}

\begin{abstract}
We give examples of Lie--Rinehart algebras whose universal enveloping algebra is  not a  Hopf algebroid either in the sense of B\"ohm and Szlach\'anyi or in the sense of Lu. We construct these examples as quotients of a canonical Lie--Rinehart algebra over a Jacobi algebra which does admit an antipode. 
%The abstract should summarize the context, content and conclusions of
%the paper in less than 200 words. It should not contain any reference
%citations or displayed equations. Typeset the abstract in 8 pt roman
% with baselineskip of 10 pt, making an indentation of 0.25 inches on the
%left and right margins.
\end{abstract}

%\tableofcontents

\singlespacing

\section{Introduction}

 {Hopf algebroids} are generalizations of Hopf algebras. In the literature,  the term \emph{Hopf algebroid}  refers to either of the following three concepts:  \emph{Hopf algebroids} in the sense of B\"ohm and Szlach\'anyi \cite{GabiK} (which we will call \emph{full}), to Hopf algebroids in the sense of Lu \cite{JHLu}, or to \emph{left Hopf algebroids} (which were introduced under the name $\times_R$-Hopf algebras by  Schauenburg \cite{Schauenburg}).

In \cite{GabiK}, an example of a full Hopf algebroid is given that does not satisfy the axioms of \cite{JHLu}; it is however unknown whether all Hopf algebroids in the sense of \cite{JHLu} are full. 
 Moreover, until  \cite{KR1} it had been an open question whether  Hopf algebroids, in the sense of \cite{GabiK} or in the sense of \cite{JHLu}, were  equivalent to left Hopf algebroids. It turns out that the universal enveloping algebra of a Lie--Rinehart algebra (a fundamental example of a left Hopf algebroid, see  \cite[Example 2]{KOKR}) will not carry an antipode in general. Hence there exist left Hopf algebroids that are neither  full nor Hopf algebroids in the sense of \cite{JHLu}. 

\medskip

Motivated by \cite{KR1}, in the present note we give further examples of Hopf algebroids without an antipode, i.e., \emph{left} Hopf algebroids which are neither \emph{full} nor satisfy the axioms of \cite{JHLu}. In  these examples we consider Jacobi algebras, a generalization of Poisson algebras first introduced in a differential geometric context in \cite{kirillov,lichnerowicz}, and construct  {quotients} of  a canonical Lie--Rinehart algebra associated to them (see Lemma \ref{AA} in Section \ref{1jetA}). In certain cases, the universal enveloping algebra of some of these quotient  Lie--Rinehart algebras (see Section \ref{frak J})  will not admit an antipode. To prove this, we use a result by Kowalzig and Posthuma \cite{NielsPosthuma} which states that an antipode will exist on the universal enveloping algebra of a Lie--Rinehart algebra $(A,L)$ if and only if $A$ is a right $(A, L)$-module or equivalently there exist \textit{flat} right $(A,L)$-connections on $A$, see \cite{HuebschmannPaper1}, or {flat} right $(A, L)$-connections \textit{characters} on $A$, see Proposition \ref{proposition: connection characters}.

\medskip

More precisely, the aim of this note is twofold: 

\medskip

Firstly, in Section \ref{1jetA},  we focus on the algebraic characterization (as a Lie--Rinehart algebra) of a canonical Lie algebroid associated to Jacobi manifolds  proposed in \cite{ben, JacobiVaisman}. Our   description is equivalent to  the one given by Okassa  \cite{Okassa}, although we use a different approach: while  \cite{Okassa} uses so-called Jacobi 1- and 2-forms  on the trivial extension of a Jacobi algebra $A$ over a field $k$  by its  module of K\"ahler differentials $\Omega^1 (A)$, we  consider the $A$-module $A \oplus \Omega^1 (A)$ as a \textit{quotient} (see \cite[Chapter 10]{Matsumura} for this construction) which we call the 1-jet space of $A$ and denote by $\mathcal J^1 (A)$.  Furthermore, we  prove that  there exist  flat right $(A, \mathcal J^1 (A) )$-connections on $A$, and hence that the universal enveloping algebra of  the  Lie--Rinehart algebra $(A, \mathcal J^1 (A))$  admits an antipode, a fact which to our knowledge has not been stated  before in the literature.   We formulate these results in our first theorem.

\medskip

Throughout this note we fix a field $k$ and  denote tensor products of $k$-vector spaces with an unadorned $\otimes$.

\begin{theorem}
\label{Jacobi flat}
  Let $(A, \{ \bullet , \bullet \}_J )$ be a Jacobi algebra over a field $k$, $I$ be the kernel of the multiplication map $\mu: A \otimes A \rightarrow A$, $\mathcal J^1 (A) :=  ( A \otimes  A ) / I^2  \cong  A \oplus  \Omega^1 (A) $ be the 1-jet space of $A$ and      $j^1 : A \rightarrow \mathcal J^1 (A)$ be the 1-jet map $  a \mapsto 1 \otimes  a \pmod{ I^2}$, for all $ a \in A$.

\begin{enumerate}

\item The pair    $(A, \mathcal J^1 (A) )$  is a  Lie--Rinehart algebra with  anchor 
\begin{equation}
\label{anchor J}
\rho_{\mathcal  J^1  }: \mathcal  J^1 (A) \longrightarrow \mathrm{Der}_k (A), \quad j^1 (a) \longmapsto \Phi_a := \{ a , \bullet \}_J + \{ 1 , a \}_J \cdot \bullet
\end{equation}
and Lie bracket on $\mathcal J^1 (A)$ given by 
\begin{equation}
\label{equation: Bracket J}
\lbrack  j^1 (f),   j^1 (g) \rbrack_{ \mathcal J^1 (A) }  =   j^1 ( \{ f , g \}_J ) .
\end{equation}
\item    The map 
\begin{equation}
\label{connection}
\varphi_{\mathcal  J^1  }: \mathcal J^1 (A) \longrightarrow A , \quad a \cdot j^1 (b) \longmapsto   \{ a , b \}_J 
\end{equation}
is a flat right $(A, \mathcal J^1 (A) )$-connection character on $A$ which induces a flat right $(A, \mathcal J^1 (A) )$-connection on $A$ .
\end{enumerate}
Consequently, the universal enveloping algebra of the Lie--Rinehart algebra $(A, \mathcal J^1 (A) )$ admits an antipode.
\end{theorem}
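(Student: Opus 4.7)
The plan is to handle the three claims of the theorem in turn, culminating in an application of the Kowalzig--Posthuma criterion. For part (1), I would first check that the anchor $\rho_{\mathcal{J}^1}$ is well defined on $(A\otimes A)/I^2$ and lands in $\mathrm{Der}_k(A)$. The natural extension $\rho:A\otimes A\to\mathrm{End}_k(A)$, $a\otimes b\mapsto a\Phi_b$, takes values in derivations because the Jacobi algebra axiom $\{a,bc\}_J=\{a,b\}_J c+b\{a,c\}_J-\{a,1\}_J bc$ is precisely the statement that each $\Phi_b$ is a $k$-derivation of $A$. Vanishing on $I^2$ reduces to checking on the generators $(1\otimes a-a\otimes 1)(1\otimes b-b\otimes 1)$, i.e.\ to the identity $\Phi_{ab}=a\Phi_b+b\Phi_a-ab\,\Phi_1$, which again follows from the same weak Leibniz rule together with $\{1,1\}_J=0$. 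To define the bracket on all of $\mathcal{J}^1(A)$, I would use that $\mathcal{J}^1(A)\cong A\oplus\Omega^1(A)$ is generated as an $A$-module by the image of $j^1$ and set
\begin{equation*}
\lbrack a\, j^1(f),b\, j^1(g)\rbrack_{\mathcal{J}^1(A)}:=ab\, j^1(\{f,g\}_J)+a\Phi_f(b)\, j^1(g)-b\Phi_g(a)\, j^1(f),
\end{equation*}
the only prescription compatible with (\ref{equation: Bracket J}) and the Lie--Rinehart compatibility. Antisymmetry is visible from this formula; the Jacobi identity for $[\bullet,\bullet]_{\mathcal{J}^1(A)}$ and the relation $[\Phi_f,\Phi_g]=\Phi_{\{f,g\}_J}$, which ensures $\rho_{\mathcal{J}^1}$ is a Lie algebra map, both reduce to the Jacobi identity for $\{\bullet,\bullet\}_J$.

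For part (2), the analogous computation shows $\varphi_{\mathcal{J}^1}$ is well defined: its extension $a\otimes b\mapsto\{a,b\}_J$ vanishes on the generators of $I^2$ since $\{1,ab\}_J-\{a,b\}_J-\{b,a\}_J+\{ab,1\}_J=0$ by the antisymmetry of the Jacobi bracket. The axioms of a right $(A,\mathcal{J}^1(A))$-connection character in Proposition \ref{proposition: connection characters} then follow by direct inspection of the formula, and the flatness of the induced right connection is once more a repackaging of the Jacobi identity for $\{\bullet,\bullet\}_J$. Part (3) is immediate from the Kowalzig--Posthuma result quoted in the introduction: the universal enveloping algebra of $(A,L)$ admits an antipode if and only if $A$ carries a flat right $(A,L)$-connection.

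The main obstacle is the extension of the bracket from $j^1(A)$ to the full module $\mathcal{J}^1(A)=(A\otimes A)/I^2$: although the formula on generators is short and forced by the Lie--Rinehart compatibility, verifying that it descends modulo $I^2$ in both slots requires a careful calculation whose only input is the Jacobi (rather than Poisson) axiom, so the extra term $\{1,\bullet\}_J$ present in $\Phi_a$ must be tracked throughout. Every other verification -- well-definedness of the anchor and of $\varphi_{\mathcal{J}^1}$, the Jacobi identity on $\mathcal{J}^1(A)$, and the flatness condition -- reduces to the same style of calculation, so it makes sense to organise the proof around a single descent lemma for $(A\otimes A)/I^2$ and then apply it uniformly.
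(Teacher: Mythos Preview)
Your proposal follows essentially the same strategy as the paper: lift the anchor, bracket, and connection character to $A\otimes A$, then show each descends modulo $I^2$, and finish with the Kowalzig--Posthuma criterion. The paper merely packages the first step differently, proving in a separate lemma that $(A,A\otimes A)$ itself is a Lie--Rinehart algebra with bracket $[a\otimes f,b\otimes g]=ab\otimes\{f,g\}_J+a\Phi_f(b)\otimes g-b\Phi_g(a)\otimes f$ (your displayed formula, before passing to the quotient), and then checks $[A\otimes A,I^2]\subset I^2$ and $\rho(I^2)=0$ directly on products of general elements of $I$.

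One technical point to tighten: for the anchor your reduction to the generators $(1\otimes a-a\otimes 1)(1\otimes b-b\otimes 1)$ is legitimate because $a\otimes b\mapsto a\Phi_b$ is left $A$-linear and these elements generate $I^2$ as a left $A$-module. But the map $\gamma\colon a\otimes b\mapsto\{a,b\}_J$ underlying $\varphi_{\mathcal J^1}$ is \emph{not} left $A$-linear (indeed $\{ca,b\}_J\neq c\{a,b\}_J$ for a genuine Jacobi bracket), so the antisymmetry identity $\{1,ab\}_J-\{a,b\}_J-\{b,a\}_J+\{ab,1\}_J=0$ alone does not suffice; you must also check $\gamma\bigl(c\cdot(1\otimes a-a\otimes 1)(1\otimes b-b\otimes 1)\bigr)=0$ for arbitrary $c\in A$, which uses the weak Leibniz rule as well. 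The paper handles this by computing $\gamma$ on a product of two arbitrary elements $\sum a_i\otimes b_i,\sum f_j\otimes g_j\in I$. The same caution applies to the descent of the bracket, which you already flag as the main obstacle.
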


Secondly, in Section \ref{section: ingredients}, we  give a method to   construct new Lie--Rinehart algebras associated to certain Jacobi algebras as a quotient of the canonical lift of $( A , \mathcal J^1 (A))$ to $(A , A \otimes  A)$. These new Lie--Rinehart algebras  do not admit antipodes in certain cases.
We summarise these results in our second theorem:

\begin{theorem}
\label{main}
Let $(A,  \{ \bullet , \bullet \}_J)$ be a Jacobi algebra over a field $k$,  let $h \in A$,  and assume that $ r \cdot \{ \bullet  , \bullet \}_J = 0$ for all $r \in \mathrm{Ann}_A( \{ h \})$. 
\begin{enumerate}
 
\item  The pair  $( A , A h \otimes  A  )$ is a Lie--Rinehart algebra with   anchor    $\rho_{ A h \otimes  A}: A h \otimes  A \rightarrow \mathrm{Der}_k (A)$  given by $ h \otimes  a \mapsto \Phi_a  $,  and  Lie bracket on $A h \otimes  A$   given by 
\begin{equation}
\label{LL}
\lbrack h \otimes  f , h \otimes  g \rbrack_{ A h \otimes  A }   = h \otimes  \{ f , g \}_J   . 
\end{equation}
\item Assume there exists a right $(A, Ah \otimes A)$-connection $\nabla^r$ on $A$.  
\begin{enumerate}

\item Then there  exists some  $a \in A$ satisfying $  a \cdot  r = \{ 1 , r \}_J$ for all $r \in \textnormal{Ann}_A( \{ h \} )$.

\item Moreover, if $\nabla^r$ is flat,  so that  $A$ is a right $(A , Ah \otimes A)$-module extending multiplication in $A$, then  for   all $b\in A$ satisfying $\{ 1 , b \}_J =0$, there exists an element $a \in A$ satisfying $a \cdot r = \{ 1 , r \}_J$ for all $r \in \textnormal{Ann}_A (\{h \} )$ and such that the following compatibility condition holds: 
 \begin{equation*}
 \{ b , a  \}_J =  \{ 1 ,  c \}_J, \quad  \textnormal{for some $c \in A$. }
 \end{equation*}
\end{enumerate}
\end{enumerate}
\end{theorem}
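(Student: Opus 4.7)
Part (1) is a structural construction of a Lie--Rinehart structure on $Ah \otimes A$ analogous to that on $\mathcal J^1(A)$ from Theorem \ref{Jacobi flat}; Part (2) uses the hypothetical right connection $\nabla^r$ by passing to the associated right connection character $\varphi : Ah \otimes A \to A$, $X \mapsto \nabla^r_X(1)$, which (by Proposition \ref{proposition: connection characters}) satisfies the twisted linearity relation
\[
\varphi(fX) \;=\; f\, \varphi(X) \;-\; \rho_{Ah \otimes A}(X)(f),
\]
and, in the flat case, the cocycle identity
\[
\varphi\bigl([X, Y]_{Ah \otimes A}\bigr) \;=\; \rho(X)(\varphi(Y)) \;-\; \rho(Y)(\varphi(X)).
\]

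\textbf{Part (1).} Extend the anchor $A$-linearly by $\rho_{Ah \otimes A}(sh \otimes a) := s\Phi_a$, and extend the bracket via the Lie--Rinehart Leibniz rule starting from the defining formula (\ref{LL}). Well-definedness on the subspace $Ah \otimes A \subset A \otimes A$ is the main point to verify: whenever $sh = 0$, the hypothesis $s \cdot \{\bullet,\bullet\}_J = 0$ kills $s\Phi_a$ directly, while the residual cross-terms in the extended bracket, of the form $\{g, s\}_J \, h \otimes f$, vanish after applying the (modified) Jacobi Leibniz rule to $\{g, sh\}_J = 0$. Antisymmetry, the Jacobi identity, and the bracket--anchor compatibility then reduce to the corresponding properties of $\{\bullet, \bullet\}_J$ together with the derivation property of $\Phi$.

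\textbf{Part (2).} For (a), apply the twisted linearity relation with $X = h \otimes 1$ and $f = r \in \textnormal{Ann}_A(\{h\})$: the left-hand side vanishes since $rh \otimes 1 = 0$ in $Ah \otimes A$, while the right-hand side equals $ra - \{1, r\}_J$ for $a := \varphi(h \otimes 1)$, using $\Phi_1(r) = \{1, r\}_J$. For (b), choose $X = h \otimes b$ with $\{1, b\}_J = 0$ and $Y = h \otimes 1$. By (\ref{LL}) and antisymmetry, $[X, Y] = -h \otimes \{1, b\}_J = 0$, so the cocycle identity reduces to $\Phi_b(a) - \Phi_1(c) = 0$, where $c := \varphi(h \otimes b)$. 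Using $\{1, b\}_J = 0$ and $\{1, 1\}_J = 0$ to simplify $\Phi_b(a) = \{b, a\}_J$ and $\Phi_1(c) = \{1, c\}_J$ yields the desired identity $\{b, a\}_J = \{1, c\}_J$.

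\textbf{Main obstacle.} The conceptually delicate step is the well-definedness in Part (1): because $Ah \otimes A$ sits inside $A \otimes A$ as a genuine subspace (rather than as a free module on generators $\{h \otimes a\}_{a \in A}$), non-trivial elements of $\textnormal{Ann}_A(\{h\})$ produce non-trivial relations among these generators which both the anchor and the bracket must respect. The hypothesis on $\textnormal{Ann}_A(\{h\})$, used in tandem with the Jacobi Leibniz rule, is precisely what enables this descent. By contrast, Part (2) becomes a short computation once the connection character is available, exploiting the particular choice $Y = h \otimes 1$ to extract $\{1, \bullet\}_J$-type expressions via the $\Phi_1$ term.
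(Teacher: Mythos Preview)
Your proposal is correct and aligns with the paper's proof. For Part~2 your argument is essentially identical to the paper's: the paper introduces the auxiliary map $\dt(a):=\varphi_h(h\otimes a)$ and records the constraint $r\cdot\dt(a)=\{a,r\}_J$ (its equation~\eqref{very important 2}), then specializes to $a=1$ for (a) and plugs $(1,b)$ into the curvature formula~\eqref{equation: curv second} for (b); your choice $a=\varphi(h\otimes 1)$, $c=\varphi(h\otimes b)$ and your ``cocycle identity'' are exactly $\dt(1)$, $\dt(b)$, and $\Cr(h\otimes 1,h\otimes b)=0$ in that notation.

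For Part~1 the underlying computations coincide, but the packaging differs. The paper first establishes the Lie--Rinehart structure on $(A,A\otimes A)$ (Lemma~\ref{AA}) and then proves a general ``quotient'' lemma (Lemma~\ref{quotient LR}) saying that for any $(A,L)$ and $h\in A$, the image $M=hL$ inherits a Lie--Rinehart structure with $[h\zeta,h\gamma]_M:=h[\zeta,\gamma]_L$ and $\rho_M(h\zeta):=\rho_L(\zeta)$ precisely when $K=\ker(\mu_h)$ lies in $\ker\rho_L$ and is a Lie ideal. It then verifies these two conditions for $L=A\otimes A$ using the hypothesis $r\cdot\{\bullet,\bullet\}_J=0$ and the identity $h\cdot\{g,r\}_J=0$ (obtained, as you do, from the Jacobi Leibniz rule applied to $\{g,rh\}_J=0$). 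Your direct verification on generators $h\otimes a$ amounts to checking exactly these two conditions, so the content is the same; the paper's modular route has the advantage of isolating a reusable statement (Lemma~\ref{quotient LR}), while yours is more self-contained. One small wording caveat: calling $Ah\otimes A$ a ``subspace'' of $A\otimes A$ is accurate as $k$-vector spaces, but the Lie--Rinehart structure you are defining is \emph{not} the restricted one (indeed $\rho_{Ah\otimes A}(sh\otimes a)=s\Phi_a\neq sh\,\Phi_a=\rho_{A\otimes A}(sh\otimes a)$); it is the quotient structure transported along the isomorphism $\mu_h:(A\otimes A)/K\to Ah\otimes A$, which is why your well-definedness check is really a descent check modulo $K$.
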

We see that if a Jacobi algebra $(A, \{ \bullet , \bullet \}_J)$ satisfying $r \cdot \{ \bullet , \bullet \}_J = 0$ for all $r  \in \mathrm{Ann}_A( \{ h \})$ for some fixed $h \in A$ does not satisfy conditions (a) or (b) in Theorem \ref{main} Part 2, then there is an obstruction to the existence of flat right $(A, Ah \otimes  A)$-connections on  $A$. By a result  of Kowalzig and Posthuma \cite{NielsPosthuma}, see Section \ref{connections} below,  the  universal enveloping algebra of the  Lie--Rinehart algebra $(A, Ah \otimes  A)$ associated to these Jacobi algebras  will  provide new examples of left Hopf algebroids without antipode.   Section \ref{main proof} is dedicated to   examples of this construction. \\

\noindent \textbf{Acknowledgements.} 
 The author also wishes to thank  Uli Kr\"ahmer, her PhD advisor, for helpful discussions  while working on  this paper; Stuart White for  advice about research; and  Gabriella B\"ohm for her encouragement and  hospitality.

\section{Background}
\label{section: background}

In this section we  recall the definition of a Lie--Rinehart algebra, see \cite{Herz, HuebschmannPCQ,HuebschmannTerm,RinehartForms} and of its universal enveloping algebra; we review the main tools used in this note, namely right $(A,L)$-connections and  right connection characters on $A$, see \cite{HuebschmannPaper1,NielsThesis,NielsPosthuma}; and give some background on Jacobi algebras, see \cite{kirillov,lichnerowicz}. 

\subsection{Lie--Rinehart algebras}
\label{LR}
The term \emph{Lie--Rinehart algebra} was coined by Huebschmann \cite{HuebschmannTerm}. However, this  algebraic structure, which was introduced by Herz \cite{Herz} under the name Lie pseudo-algebra (also known as \emph{Lie algebroid} \cite{Pradines} in a differential geometric context), had been developed and studied before as a  generalization of Lie algebras.  
 See \cite[Section 1]{HuebschmannPCQ} for some historical remarks on this development.

\begin{definition}
Let $R$ be a commutative ring with identity, $\left(A, \cdot \right)$ a commutative $R$-algebra and $\left( L , [\bullet, \bullet]_L \right) $ a Lie algebra over $R$.  A pair $\left(A, L \right)$ is called a \textbf{Lie--Rinehart algebra} over $R$ if $L$ has a left $A$-module structure  $ A \otimes_R L \rightarrow L$, $a \otimes_R \xi \mapsto a \cdot \xi$ for $a \in A, \xi \in L$,   and there is an  $A$-linear Lie algebra homomorphism $ \rho: L \rightarrow \mathrm{Der}_R \left( A \right)$, called the \textbf{anchor map}, satisfying the Leibniz rule
\begin{equation}
\label{LRLR}
[ \xi, a \cdot \zeta ]_L = a \cdot [ \xi , \zeta ]_L + \rho ( \xi ) ( a ) \cdot \zeta, \quad a \in A, \xi, \zeta \in L.
\end{equation}
\end{definition}

In what follows, $R$ will always be a commutative ring with identity and, unless stated otherwise, all algebras will be over $R$. 

\medskip

A fundamental example is $(A , \mathrm{Der}_R (A))$  where $A$ is a commutative algebra, and we consider the  usual Lie bracket and $A$-module structure on $\mathrm{Der}_R (A)$ with   anchor given by the identity. See references above for further examples and applications. 

\medskip

An important milestone in the development of these algebraic structures is  Rinehart's work  \cite{RinehartForms} in which he gives the structure of their universal enveloping algebra  (see \cite[Section 2]{RinehartForms}), generalizing the construction of the universal  enveloping algebra of a Lie algebra.

\begin{definition}[Rinehart \cite{RinehartForms}]  Let $(A,L)$ be a Lie--Rinehart algebra. Its \textbf{universal enveloping algebra},  denoted by $V(A,L)$, is the universal associative $R$-algebra with

 \begin{enumerate}

\item an $R$-algebra map $A \rightarrow V(A,L)$,
\item  a Lie algebra map $ \iota_L: (L, [ \bullet , \bullet ]_L) \rightarrow ( V(A,L), [ \bullet , \bullet ])$ given by $\xi \mapsto \iota_L (\xi ) = : \bar \xi$, where $[ \bullet , \bullet ]$ denotes the commutator in $V(A,L)$
\end{enumerate}
such that for all  $a \in A$ and $\xi \in L$ we have 
\begin{equation*}
[ \bar \xi , a ] = \rho ( \xi)(a), \quad a \bar \xi = \overline{a \cdot \xi}
\end{equation*}
 where the product in $V(A,L)$ is denoted by concatenation.
\end{definition}

\subsection{Right $(A,L)$-module structures and connections on $A$}
\label{connections}

The concepts of left (respectively, right) $(A,L)$-connections  and $(A,L)$-module structures on  $A$-modules were  introduced in \cite{HuebschmannPaper1}. There is an equivalence of categories between these module structures and left (respectively, right) $V(A,L)$-module structures.  While the anchor map defines a canonical left $V(A,L)$-module structure on $A$ itself \cite[Remark 3.10]{NielsPosthuma}, there is no  canonical  right $V(A,L)$-module structure on $A$, see \cite[Section 3.2.2]{NielsPosthuma} for discussion. In fact, as proved in \cite{KR1},  $A$  will not carry a   right $V(A,L)$-module structure in general.   We will only be considering right $(A,L)$-module structures (and connections) on $A$.

\begin{definition}
\label{definition: right connection}
Let $(A,L)$ be a Lie--Rinehart algebra.   A \textbf{ right $(A,L)$-connection on  $A$}, where $A$ is considered as a module  over itself,  is an $R$-linear  map   
$   \nabla^r:   A \otimes_R L \rightarrow  A$ given by $      a \otimes_{R} \xi \mapsto  \nabla^r (a \otimes_R \xi ) =: \nabla^r_\xi (a)$ 
satisfying
\begin{align}
   \label{equation: huebschmann1}
       \nabla^r_\xi (a \cdot b)  & =      \nabla^r_{a \cdot \xi } (b) \\
 &  = a \cdot  \nabla^r_\xi (b) -  \rho ( \xi ) (a) \cdot b , \quad a , b \in A , \xi \in L. \label{equation: huebschmann2}
 \end{align}
A   right $(A, L)$-connection on  $A$ is \textbf{flat} if the map $ \nabla^r: A \otimes_R L \rightarrow A$ turns $A$ into a right $L$-module, that is, $   \left(  [ \nabla^r_\xi, \nabla^r_\zeta ]_{\mathrm{Der}_R (A)} + \nabla^r_{[\xi, \zeta ]_L } \right) (a) = 0$ for $ a  \in A$, $\xi, \zeta \in L$.
\end{definition}

Since  right $(A, L)$-connections are not necessarily flat,  it is useful to define the following:

\begin{definition}
\label{definition: curvature operators}
Given a right $(A,L)$-connection on $A$, we define the operator $\Cr$ which we call \textbf{right $(A,L)$-curvature operator}, as follows:
%\begin{small}
\begin{equation*}
\Cr  :  L \otimes_R  L \otimes_R A \longrightarrow A;   \quad  \  ( \xi , \zeta , a) \longmapsto \Cr( \xi , \zeta ) (a) :=  \left(  [ \nabla^r_\xi, \nabla^r_\zeta ]_{\mathrm{Der}_R (A)} + \nabla^r_{[\xi, \zeta ]_L } \right) (a)
\end{equation*}
%\end{small}
for $ a \in A, \xi, \zeta \in L$. We call $\Cr ( \xi , \zeta ) (a) \in A$ the \textbf{curvature} of the right $(A,L)$-connection $\nabla^r : A \otimes_R L \rightarrow A$ on $\xi , \zeta \in L$ evaluated at $a \in A$. 
\end{definition}

We are now ready to define the main object we will consider:

\begin{definition}
A flat right $(A,L)$-connection on $A$, that is, a connection with   $\Cr ( \xi , \zeta ) (a) = 0$ for all $a \in A$ and $\xi , \zeta \in L$, turns $A$ into a  \textbf{right $(A,L)$-module}. 

\end{definition}  
 Now, from \eqref{equation: huebschmann1} it follows that $\nabla^r ( a \otimes_R \xi ) = \nabla^r ( 1 \otimes_R a \cdot \xi )$. So we deduce that  a right $(A,L)$-connection on $A$  is in fact a certain map from $ A \otimes_A L $ to $A$, that is, a map from $L$ to $A$, satisfying a  Leibniz-type rule. More precisely, we have: 

\begin{proposition}
\label{proposition: connection characters}
There exists  a one-to-one correspondence between    right  $(A,L)$-connections on $A$ and operators
\begin{equation}
\dr  : L \longrightarrow A, \quad \xi \longmapsto \nabla^r_\xi (1_A) =: \dr ( \xi ), \quad \xi \in L \label{equation: right connection character}
\end{equation}
satisfying
\begin{equation}
 \label{equation: right connection character axi} 
              \delta^\nabla_r (a \cdot \xi )  =  a \cdot \delta^\nabla_r ( \xi ) - \rho ( \xi ) (a).
               \end{equation} 
The operator $\dr$ is called \textbf{right connection character.}
\end{proposition}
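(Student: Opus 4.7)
The plan is to exhibit the correspondence explicitly by sending a connection $\nabla^r$ to its value at $1_A$, and conversely recovering a connection from a character via $\nabla^r_\xi(a) := \dr(a \cdot \xi)$, and then verify that these two constructions are mutually inverse and that each lands in the expected class.

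For the forward direction, starting from a right $(A,L)$-connection $\nabla^r$, I set $\dr(\xi) := \nabla^r_\xi(1_A)$ and check \eqref{equation: right connection character axi}. Equation \eqref{equation: huebschmann1} with $b = 1_A$ gives $\nabla^r_\xi(a) = \nabla^r_{a\cdot\xi}(1_A) = \dr(a\cdot\xi)$, while equation \eqref{equation: huebschmann2} with $b = 1_A$ gives $\nabla^r_\xi(a) = a\cdot\dr(\xi) - \rho(\xi)(a)$. Equating these two expressions yields precisely the required Leibniz-type identity for $\dr$.

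For the backward direction, given $\dr: L \to A$ satisfying \eqref{equation: right connection character axi}, I define $\nabla^r_\xi(a) := \dr(a\cdot\xi)$. Then \eqref{equation: huebschmann1} follows immediately from commutativity of $A$ and associativity of the $A$-module action on $L$, since $\nabla^r_\xi(a\cdot b) = \dr((ab)\cdot\xi) = \dr(b\cdot(a\cdot\xi)) = \nabla^r_{a\cdot\xi}(b)$. To verify \eqref{equation: huebschmann2}, I apply the hypothesis on $\dr$ to $\dr((ab)\cdot\xi)$, obtaining $ab\cdot\dr(\xi) - \rho(\xi)(ab)$, and then use the derivation property of $\rho(\xi)$ to rewrite $\rho(\xi)(ab) = a\cdot\rho(\xi)(b) + \rho(\xi)(a)\cdot b$, which after rearranging gives exactly $a\cdot\nabla^r_\xi(b) - \rho(\xi)(a)\cdot b$.

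Finally, to see the two constructions are mutually inverse: starting from $\nabla^r$, passing to $\dr$ and back yields $\xi \mapsto \dr(a\cdot\xi) = \nabla^r_{a\cdot\xi}(1_A) = \nabla^r_\xi(a)$ by \eqref{equation: huebschmann1}; starting from $\dr$, passing to $\nabla^r$ and back yields $\xi \mapsto \nabla^r_\xi(1_A) = \dr(1_A\cdot\xi) = \dr(\xi)$. There is no real obstacle here; the only mildly delicate bookkeeping is the symmetric use of commutativity of $A$ together with the Leibniz rule for the anchor in verifying \eqref{equation: huebschmann2}, but this is essentially forced by the observation already made in the excerpt that $\nabla^r(a\otimes_R \xi) = \nabla^r(1\otimes_R a\cdot\xi)$, which tells us that all the information in a right connection is encoded by its restriction to $1_A \otimes_R L$.
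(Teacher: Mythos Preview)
Your proof is correct and follows the same approach as the paper, namely evaluating the connection at $1_A$ and using axioms \eqref{equation: huebschmann1}--\eqref{equation: huebschmann2}; in fact your argument is more complete, since the paper only writes out the forward direction (that $\dr$ satisfies \eqref{equation: right connection character axi}) and leaves the inverse construction and bijectivity implicit.
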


\begin{proof} 
The property of the map $\dr: L \rightarrow A$ given in   \eqref{equation: right connection character axi} follows from   \eqref{equation: right connection character} and \eqref{equation: huebschmann2} since  $ \delta^\nabla_r ( a \cdot \xi )   =  \nabla^r_{a \cdot \xi } (1_A)  = a \cdot \nabla^r_{\xi } (1_A) - \rho ( \xi ) (a) \cdot 1_A =  a \cdot \delta^\nabla_r ( \xi ) - \rho( \xi ) (a) $.   
\end{proof}
Note that this correspondence is   implicit in \cite[Theorem 1]{HuebschmannPaper1}, see also \cite[Theorem 4.2.7 and p85]{NielsThesis}.  
\medskip

Since a right $(A,L)$-connection $\nabla^r $ can be described in terms of a corresponding map $\dr$, a natural description of the operator $\Cr$ follows:

\begin{lemma}
\label{theorem: A-linear} 
In terms of the operator  $\dr$, the map  $\Cr : L \otimes_R L \otimes_R A \rightarrow A$ is  
      %
%\begin{small}
\begin{equation}
\Cr: L \otimes_R L \otimes_R A \longrightarrow A , \quad \left(   \xi , \zeta , c \right) \longmapsto    c \cdot \left( - \rho( \xi) \left( \dr  ( \zeta ) \right) + \rho \left( \zeta \right)  \left( \dr ( \xi ) \right) + \dr  \left(  [\xi, \zeta ]_L \right) \right) \label{equation: C_r 1}
\end{equation}
%\end{small}
%\vspace{-11pt}
%
 where $ c \in A$ and  $\xi, \zeta \in L$. Moreover, the operator $\Cr$ is trilinear.
\end{lemma}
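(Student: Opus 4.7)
The plan is to first reduce $\nabla^r_\xi$ to $\dr$ by specialising the Leibniz rule \eqref{equation: huebschmann2} at $b=1_A$, which gives
\begin{equation*}
\nabla^r_\xi(a)=a\cdot\dr(\xi)-\rho(\xi)(a),\qquad a\in A,\ \xi\in L,
\end{equation*}
and then to compute $\Cr(\xi,\zeta)(c)=\nabla^r_\xi\nabla^r_\zeta(c)-\nabla^r_\zeta\nabla^r_\xi(c)+\nabla^r_{[\xi,\zeta]_L}(c)$ by substituting this expression twice and expanding with the Leibniz rule \eqref{equation: huebschmann2}.

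Explicitly, I would compute
\begin{equation*}
\nabla^r_\xi\bigl(c\cdot\dr(\zeta)-\rho(\zeta)(c)\bigr)
 = c\cdot\dr(\zeta)\cdot\dr(\xi)-c\cdot\rho(\xi)(\dr(\zeta))-\rho(\xi)(c)\cdot\dr(\zeta)-\rho(\zeta)(c)\cdot\dr(\xi)+\rho(\xi)(\rho(\zeta)(c)),
\end{equation*}
subtract the analogous expression with $\xi$ and $\zeta$ swapped, and observe that commutativity of $A$ eliminates the quadratic term $c\cdot\dr(\xi)\dr(\zeta)$, while the mixed terms $\rho(\xi)(c)\cdot\dr(\zeta)$ and $\rho(\zeta)(c)\cdot\dr(\xi)$ cancel in pairs. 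The remaining derivation commutator $[\rho(\xi),\rho(\zeta)](c)$ equals $\rho([\xi,\zeta]_L)(c)$ because $\rho$ is a Lie algebra homomorphism, so it cancels against the $-\rho([\xi,\zeta]_L)(c)$ coming from $\nabla^r_{[\xi,\zeta]_L}(c)=c\cdot\dr([\xi,\zeta]_L)-\rho([\xi,\zeta]_L)(c)$. What is left is precisely the right-hand side of \eqref{equation: C_r 1}.

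For trilinearity, $A$-linearity in $c$ is immediate from \eqref{equation: C_r 1}, since the formula displays $\Cr(\xi,\zeta,c)$ as $c$ multiplied by an element of $A$ depending only on $\xi,\zeta$. The non-trivial point is $A$-linearity in $\xi$ (and, by antisymmetry, in $\zeta$): here $\dr$ is only $R$-linear, satisfying the twisted rule \eqref{equation: right connection character axi}, and the Lie bracket satisfies the anchored rule \eqref{LRLR} so that $[a\cdot\xi,\zeta]_L=a\cdot[\xi,\zeta]_L-\rho(\zeta)(a)\cdot\xi$. I would therefore substitute $a\cdot\xi$ for $\xi$ in \eqref{equation: C_r 1}, expand each of the three summands using these two identities and the derivation property of $\rho(\zeta)$, and verify that all the correction terms combine to cancel: the two occurrences of $\rho(\zeta)(a)\cdot\dr(\xi)$ have opposite signs, and $\rho(\xi)\rho(\zeta)(a)-\rho(\zeta)\rho(\xi)(a)=\rho([\xi,\zeta]_L)(a)$ cancels against $-\rho([\xi,\zeta]_L)(a)$ coming from $\dr([a\xi,\zeta]_L)$.

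The main obstacle is purely the bookkeeping in this last step: there are six correction terms that must be grouped and cancelled correctly using, simultaneously, the non-$A$-linearity of $\dr$, the derivation property of the anchor, and the Leibniz rule for $[\bullet,\bullet]_L$. The computation is otherwise a direct unwinding of definitions.
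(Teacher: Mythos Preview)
Your argument is correct: the expansion of $\nabla^r_\xi(a)$ as $a\cdot\dr(\xi)-\rho(\xi)(a)$, the cancellations you describe in the curvature computation, and the separate verification of $A$-linearity in $\xi$ all go through exactly as you indicate.

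The paper's proof takes a slightly different, more economical route. Rather than first deriving formula \eqref{equation: C_r 1} at generators and then separately checking $A$-linearity in each slot, the paper computes $\Cr(a\cdot\xi,\,b\cdot\zeta)(c)$ in one pass for general $a,b,c\in A$, using the identity $\nabla^r_{a\cdot\xi}(c)=\dr(a\cdot c\cdot\xi)$ (which follows from \eqref{equation: huebschmann1}) to rewrite everything directly in terms of $\dr$, and then expands with \eqref{equation: right connection character axi} and the Leibniz rule \eqref{LRLR}. The outcome is $a\cdot b\cdot c\cdot\bigl(-\rho(\xi)(\dr(\zeta))+\rho(\zeta)(\dr(\xi))+\dr([\xi,\zeta]_L)\bigr)$, which establishes both the explicit formula and the $A$-trilinearity simultaneously. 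Your two-step approach trades this economy for a cleaner separation of concerns; both use the same ingredients (the twisted Leibniz rule for $\dr$, $A$-linearity and the Lie homomorphism property of $\rho$, and the anchored Leibniz rule for $[\bullet,\bullet]_L$), so the difference is organizational rather than substantive.
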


\begin{proof}
Using  Definition \ref{definition: right connection}  and Proposition \ref{proposition: connection characters} we  argue as follows
\begin{align*}
& \Cr   (a \cdot \xi , b \cdot \zeta ) (c)  = \nabla^r_{ a \cdot \xi } \left(       \nabla^r_{ b \cdot \zeta } (c) \right) -  \nabla^r_{ b \cdot \zeta } \left(       \nabla^r_{ a \cdot \xi } (c) \right) + \nabla^r_{  [ a \cdot \xi, b \cdot \zeta ]_L} (c) \\
& = \nabla^r_{ a \cdot \xi } \left( \dr (b \cdot c \cdot \zeta ) \right) - \nabla^r_{ a \cdot \xi } \left( \dr (a \cdot c \cdot \xi ) \right) + \dr \left( c \cdot [ a \cdot \xi , b \cdot \zeta ]_L \right) \\
 & = \dr \left( a \cdot  \dr (b \cdot c \cdot \zeta ) \cdot \xi \right) - \dr \left( b \cdot  \dr ( a \cdot c \cdot \xi ) \cdot \zeta \right)  \\
 & \phantom{={}}  + \dr \left( a \cdot b \cdot c \cdot [ \xi , \zeta ]_L + a \cdot c \cdot \rho ( \xi ) (b) \cdot \zeta - c \cdot b \cdot \rho ( \zeta ) (a) \cdot \xi  \right) \\
 & =  a \cdot b \cdot c \cdot \left( - \rho( \xi) \left( \dr  ( \zeta ) \right) + \rho ( \zeta )  \left( \dr ( \xi ) \right) + \dr  \left(  [\xi, \zeta ]_L \right) \right) . \qedhere
\end{align*}
\end{proof}
Since the operator $\Cr$ is $A$-linear, and in fact $A$-trilinear, we only  need to consider the curvature of a right $(A,L)$-connection at $1_A \in A$  and so, we  write $\Cr ( \xi , \zeta ) ( 1_A ) =: \Cr ( \xi , \zeta )$ for $ \xi , \zeta \in L$. Furthermore, since right $(A, L)$-connections on $A$ and right $(A,L)$-connnection characters on $A$ are equivalent, we will refer to the operator $\dr$ in \eqref{equation: right connection character} as \textit{connection}.

\medskip

In \cite[Proposition 3.11]{NielsPosthuma}, it is proved that there exists   an antipode on the universal enveloping algebra of $(A,L)$ turning the left Hopf algebroid structure  on $V(A,L)$ into a {full Hopf algebroid}  if and only if there exists a right $V(A,L)$-module structure on $A$.  From \cite{KR1}, it follows that the left Hopf algebroid  $V(A,L)$ is not, in general, a  full one. However, examples where $V(A,L)$ admits an antipode do exist: e.g. the universal enveloping algebra of a Lie algebroid \cite{WeinsteinTransverse} and of the  canonical Lie--Rinehart algebra associated to  Poisson algebras \cite[Section (3.2)]{HuebschmannPaper1}.

\subsection{Jacobi algebras}
\label{Jacobi algebras}

Jacobi algebras were first introduced by Kirillov \cite{kirillov} under the name ``local Lie algebras"  and independently by Lichnerowicz \cite{lichnerowicz} as the algebraic structure on the ring of $C^\infty$-functions on a certain kind of smooth manifolds, called Jacobi manifolds, see Section \ref{Jacobi section} below.  (See \cite[Section 2.2]{JacobiMarle} for some remarks comparing both definitions.)
Here we give a purely algebraic definition, see \cite{GRJacobi} for a graded version and \cite{Militaru} for results on Frobenius Jacobi algebras, representations of Jacobi algebras, and classification. 
\begin{definition}
A \textbf{Jacobi algebra} over $R$ is a commutative $R$-algebra $(A, \cdot )$ endowed with an $R$-linear  Lie bracket $\{ \bullet , \bullet \}_J$, called the Jacobi bracket, satisfying the  Leibniz rule
\begin{equation}
\label{equation: generalized leibniz}
\{a \cdot b, c\}_J = a \cdot \{ b, c \}_J + b \cdot \{a, c \}_J -  a \cdot b \cdot \{1, c \}_J, \quad a, b, c \in A.
\end{equation}

\end{definition}

Poisson algebras can be seen as Jacobi algebras where $\{ 1 , a \}_J = 0$ for all $a \in A$.

\begin{proposition} 
The  Jacobi bracket $\{ \bullet , \bullet  \}_J$ induces a Lie algebra map defined by 
\begin{equation}
\label{equation: Hamiltonian Jacobi}
\Phi : A \longrightarrow \mathrm{Der}_R (A), \quad a \longmapsto \Phi_a :=   \{a, \bullet \}_J + \bullet \cdot \{1, a\}_J    , \quad a \in A.
\end{equation}
\end{proposition}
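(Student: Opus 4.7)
The plan is to verify the two claims separately: first, that for each $a \in A$ the map $\Phi_a$ is a $k$-linear derivation of $A$; and second, that $\Phi$ respects the Lie brackets, i.e.\ $\Phi_{\{a,b\}_J} = [\Phi_a, \Phi_b]_{\mathrm{Der}_k(A)}$.

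For the derivation property, I would take $b, c \in A$ and use antisymmetry of $\{\bullet,\bullet\}_J$ combined with the generalized Leibniz rule \eqref{equation: generalized leibniz} applied to $\{bc, a\}_J$ to obtain
\[
\{a, bc\}_J \;=\; b \cdot \{a, c\}_J + c \cdot \{a, b\}_J + bc \cdot \{1, a\}_J.
\]
Adding $bc \cdot \{1, a\}_J$ from the definition of $\Phi_a(bc)$ then yields $b \cdot \{a,c\}_J + c \cdot \{a,b\}_J + 2 bc \cdot \{1,a\}_J$, which is exactly $b \cdot \Phi_a(c) + c \cdot \Phi_a(b)$. The role of the correction term $\bullet \cdot \{1, a\}_J$ in the definition of $\Phi_a$ is precisely to compensate for the anomalous $-a \cdot b \cdot \{1,c\}_J$ contribution in \eqref{equation: generalized leibniz}.

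For the Lie algebra map property, I would expand $[\Phi_a, \Phi_b]_{\mathrm{Der}_k(A)}(c) = \Phi_a(\Phi_b(c)) - \Phi_b(\Phi_a(c))$ using that $\Phi_a$ is now known to be a derivation and applying the Leibniz rule \eqref{equation: generalized leibniz} whenever $\{a, \bullet\}_J$ hits a product. After doing this, terms of the form $\{1,a\}_J \cdot \{b,c\}_J$ and the double products $c \cdot \{1,a\}_J \cdot \{1,b\}_J$ appear symmetrically in $a$ and $b$ (using commutativity of $A$) and therefore cancel upon antisymmetrisation. What remains is
\[
[\Phi_a, \Phi_b]_{\mathrm{Der}_k(A)}(c) \;=\; \{a, \{b, c\}_J\}_J - \{b, \{a, c\}_J\}_J + c \cdot \bigl( \{a, \{1, b\}_J\}_J - \{b, \{1, a\}_J\}_J \bigr).
\]
The first two terms are rewritten via the Jacobi identity as $\{\{a,b\}_J, c\}_J$, while specialising the Jacobi identity to the triple $(a, b, 1)$ converts the parenthesised expression into $\{1, \{a,b\}_J\}_J$. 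The sum is exactly $\Phi_{\{a,b\}_J}(c)$.

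The main obstacle is purely bookkeeping: the raw expansion of $[\Phi_a, \Phi_b]_{\mathrm{Der}_k(A)}(c)$ produces on the order of a dozen terms, and it takes care to see which pairs vanish by commutativity after antisymmetrisation and which recombine through the Jacobi identity. The conceptual content is that the same correction term $\bullet \cdot \{1, a\}_J$ that repairs the failure of $\{a,\bullet\}_J$ to be a derivation on the first level is compensated on the second level by the Jacobi identity evaluated with one entry equal to $1$, so both axioms close up simultaneously.
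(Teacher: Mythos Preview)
Your proposal is correct and follows essentially the same approach as the paper: the derivation check is identical, and for the Lie algebra homomorphism property the paper simply asserts that ``a straightforward computation shows that the derivation $\Phi_{\{a,b\}_J} - [\Phi_a,\Phi_b]_{\mathrm{Der}_R(A)}$ vanishes on all $c \in A$'' using the Jacobi identity, which is exactly the computation you have spelled out in detail (including the key use of the Jacobi identity on the triple $(a,b,1)$). One trivial point: the paper works over a general commutative ring $R$ here, so you should write $\mathrm{Der}_R(A)$ rather than $\mathrm{Der}_k(A)$.
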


\begin{proof}
First we check that $\Phi_a $ is a derivation on $A$  for all $a \in A$. We have
\begin{equation*}
\begin{split}
\Phi_a \left( b \cdot c \right) & = \{ a , b \cdot c \}_J + b \cdot c \cdot \{ 1 , a  \}_J = b \cdot \{ a , c \}_J + c \cdot \{ a , b \}_J + 2 \cdot b \cdot c \cdot \{1 , a \}_J \\
 & = b \cdot \Phi_a \left( c \right) + c \cdot \Phi_a \left( b \right) .
\end{split}
\end{equation*}
Furthermore,  using the fact that  $\{ \bullet , \bullet \}_J$ satisfies the Jacobi identity, a straightforward computation shows that the derivation $ \Phi_{ \{a, b \}_J } - \left[  \Phi_a, \Phi_b  \right]_{\mathrm{Der}_R(A)} $ vanishes on all $c \in A$. Hence $\Phi$ is a Lie algebra homomorphism.   \end{proof}

\begin{corollary}
The operator $\{ a , \bullet \}_J$ is a first order differential operator for all $a \in A$, and a derivation for the case $a = 1$. 
\end{corollary}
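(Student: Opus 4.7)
The plan is to recall the standard algebraic characterization of a first-order differential operator on a commutative algebra and then verify it for $D_a := \{a,\bullet\}_J$ directly from the defining identities of a Jacobi bracket. Concretely, an $R$-linear map $D:A\to A$ is a differential operator of order at most one precisely when
\[
D(b\cdot c) \;=\; b\cdot D(c) + c\cdot D(b) - b\cdot c\cdot D(1), \qquad b,c\in A.
\]
So the task reduces to producing this identity for $D_a$.

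First I would apply the generalized Leibniz rule \eqref{equation: generalized leibniz} to $\{b\cdot c,a\}_J$, which gives
\[
\{b\cdot c,a\}_J \;=\; b\cdot\{c,a\}_J + c\cdot\{b,a\}_J - b\cdot c\cdot\{1,a\}_J.
\]
Next I would use the antisymmetry of the Lie bracket $\{\bullet,\bullet\}_J$ to rewrite each term with $a$ in the first slot. Since $\{1,a\}_J = -\{a,1\}_J = -D_a(1)$, the right-hand side becomes $b\cdot D_a(c) + c\cdot D_a(b) - b\cdot c\cdot D_a(1)$ (up to the overall sign that flips both sides simultaneously), which is exactly the order-one condition applied to $D_a$.

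For the second claim, I would specialize to $a=1$. Antisymmetry of the Jacobi bracket forces $\{1,1\}_J = 0$, i.e.\ $D_1(1) = 0$, so the order-one identity collapses to the usual Leibniz rule $D_1(b\cdot c) = b\cdot D_1(c) + c\cdot D_1(b)$, showing $\{1,\bullet\}_J$ is a derivation of $A$. There is no real obstacle here: the entire argument is a two-line manipulation using only \eqref{equation: generalized leibniz} and the antisymmetry of the Jacobi bracket, and no Jacobi identity is required.
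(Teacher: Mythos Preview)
Your argument is correct. The paper itself gives no explicit proof of this corollary; it is meant to follow immediately from the preceding Proposition, which shows that $\Phi_a = \{a,\bullet\}_J + (\bullet)\cdot\{1,a\}_J$ is a derivation. From that, one reads off $\{a,\bullet\}_J = \Phi_a - (\bullet)\cdot\{1,a\}_J$, a derivation minus a multiplication operator, hence a first-order differential operator; and for $a=1$ the multiplication term vanishes since $\{1,1\}_J=0$, leaving $\{1,\bullet\}_J=\Phi_1\in\mathrm{Der}_R(A)$.

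Your route bypasses the Proposition and extracts the order-one identity directly from the generalized Leibniz rule \eqref{equation: generalized leibniz} together with antisymmetry. This is really the same computation unpacked (the proof of the Proposition already established $\{a,b\cdot c\}_J = b\{a,c\}_J + c\{a,b\}_J + b\cdot c\cdot\{1,a\}_J$, which is your identity after using $\{1,a\}_J=-\{a,1\}_J$). The advantage of your phrasing is that it is self-contained and does not rely on having first introduced $\Phi_a$; the advantage of the paper's phrasing is that once $\Phi_a$ is known to be a derivation, the corollary is a one-line remark.
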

See also \cite[Proof of Lemma 4]{kirillov}.

\section{The Lie--Rinehart algebra $ (  A , \mathcal J^1 (A) )$ over a Jacobi algebra $A$}
\label{section: Jacobi flat}
 In this  section we develop the proof of Theorem \ref{Jacobi flat}.

\subsection{The 1-jet space of $A$}
\label{1jetA}

In this section we first describe the  $A$-module structure of the 1-jet space of  a commutative algebra $A$. Then we consider $A$ to be a Jacobi algebra and show that the pair $(A , A \otimes A)$ admits a Lie--Rinehart algebra structure (see Lemma \ref{AA}) which descends to a Lie--Rinehart structure on   $(A, \mathcal J^1(A))$, (Theorem \ref{Jacobi flat} Part 1).

\begin{definition}
Let $A$ be a commutative algebra, $\mu: A \otimes_R A \rightarrow A $ be the multiplication map $a \otimes_R b \mapsto a \cdot b$, and let $I = \mathrm{Ker}{\mu} \in A \otimes_R A$.
The \textbf{1-jet space of $A$} is an  $A$-module defined by 
\begin{equation}
\label{defJ}
\mathcal J^1 (A) := ( A \otimes_R A ) / I^2.
\end{equation} 
\end{definition}

We now provide a characterization of $\mathcal J^1 (A)$ as the trivial extension of $A$ by $\Omega^1 (A)$, the $A$-module of K\"ahler differentials over $A$, explaining its relation to \eqref{defJ}, see \cite[Chapter 10]{Matsumura} for more details.

\begin{proposition}
  There exists a canonical isomorphism of $A$-modules
\begin{equation}
\label{canonical iso}
( A \otimes_R A ) / I^2 \cong A \oplus \Omega^1 (A)  
\end{equation}
which identifies $a \otimes_R b \pmod{I^2} $ with $(a \cdot b , a \cdot db)$ for all $a , b \in A$.
\end{proposition}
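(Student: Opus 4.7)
The plan is to realize $(A \otimes_R A)/I^2$ as a split extension of $A$ by $\Omega^1(A)$, using the two standard facts that $I/I^2 \cong \Omega^1(A)$ (this is essentially the definition of K\"ahler differentials in the style of Matsumura) and that the short exact sequence splits canonically via the unit on one tensor factor. I will work with the left $A$-module structure on $A \otimes_R A$, so $c \cdot (a \otimes_R b) := (c \cdot a) \otimes_R b$, which descends to $(A \otimes_R A)/I^2$ since $I$ and hence $I^2$ are $A$-subbimodules.

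First, I would consider the short exact sequence of $A$-modules
\begin{equation*}
0 \longrightarrow I/I^2 \longrightarrow (A \otimes_R A)/I^2 \xrightarrow{\bar{\mu}} A \longrightarrow 0,
\end{equation*}
where $\bar{\mu}$ is the map induced by $\mu$, which is well-defined modulo $I^2 \subset I = \mathrm{Ker}\, \mu$. A canonical $A$-linear splitting is given by $s: A \to (A \otimes_R A)/I^2$, $a \mapsto a \otimes_R 1 \pmod{I^2}$. Thus
\begin{equation*}
(A \otimes_R A)/I^2 \cong A \oplus I/I^2 \qquad \text{as $A$-modules,}
\end{equation*}
with the isomorphism sending $x \pmod{I^2}$ to $(\bar{\mu}(x),\, x - s(\bar{\mu}(x)))$.

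Next, I would invoke the identification $I/I^2 \cong \Omega^1(A)$ via the universal $R$-derivation
\begin{equation*}
d: A \longrightarrow I/I^2, \quad a \longmapsto 1 \otimes_R a - a \otimes_R 1 \pmod{I^2},
\end{equation*}
which is the standard presentation of K\"ahler differentials (see \cite[Chapter 10]{Matsumura}). Combining this with the splitting above gives the required isomorphism $(A \otimes_R A)/I^2 \cong A \oplus \Omega^1(A)$.

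Finally, to obtain the explicit formula, I would decompose a general element by the trick
\begin{equation*}
a \otimes_R b = ab \otimes_R 1 + a \cdot (1 \otimes_R b - b \otimes_R 1),
\end{equation*}
where the first summand lies in $s(A)$ and the second lies in $I$ and represents $a \cdot db$ in $I/I^2$. Reading off the two components yields $(a \cdot b,\, a \cdot db)$ as claimed, and $A$-linearity with respect to the chosen left action is immediate from this formula. The only minor subtlety worth flagging is the convention on which of the two $A$-module structures of $A \otimes_R A$ one uses: the choice of splitting $a \mapsto a \otimes_R 1$ is what makes the formula come out as $(ab, a \cdot db)$ rather than $(ab, b \cdot da)$; the other choice would produce the mirror identification.
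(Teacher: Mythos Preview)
Your proof is correct and essentially the same as the paper's. Both use the section $a \mapsto a \otimes_R 1$ to split off a copy of $A$, identify $I/I^2$ with $\Omega^1(A)$, and read off the explicit formula from the decomposition $a \otimes_R b = ab \otimes_R 1 + (a \otimes_R b - ab \otimes_R 1)$; the only difference is that you phrase this as a split short exact sequence while the paper phrases it as an internal direct sum $A \otimes_R A = \lambda(A) \oplus I$ before passing to the quotient.
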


\begin{proof}
First note $\Omega^1 (A) = I/ I^2$ and $da = 1 \otimes_R a - a \otimes_R 1 \pmod{I^2}$, so that  $a \cdot db = a \otimes_R b - a \cdot b \otimes_R 1 \pmod{I^2}$. Let $\lambda : A \rightarrow A \otimes_R A$ be given by $a \mapsto a \otimes_R 1$. Then for all $a , b \in A$ we  write $a  \otimes_R b  \in A \otimes_R A$ as   $ a  \otimes_R b = a \cdot b  \otimes_R 1 + ( a \otimes_R b - a \cdot b \otimes_R 1 ) $  where $a \cdot b  \otimes_R 1 \in \lambda (A)$ and $a \otimes_R b - a \cdot b  \otimes_R 1   \in I$. Since  $ I \cap \lambda (A)  = 0  $,  we deduce $ A  \otimes_R A   = \lambda \left( A \right) \oplus I$ and  $   \lambda (A)   / I^2 =  \lambda (A)$. Since $\lambda$ is injective, we can identify $\lambda(A)$ with $A$, 
hence 
\begin{equation*}
( A \otimes_R A ) / I^2  =  \left( \lambda (A) \oplus I \right) / I^2  = \left(  \lambda (A) / I^2 \right) \oplus \left( I /I^2 \right)   \cong A \oplus \Omega^1 (A)    . \qedhere
\end{equation*}
\end{proof}

\begin{proposition}
As an  $A$-module, $\mathcal J^1 (A)$ is generated by the image of $(A , \cdot )$ under the map $j^1 : A \rightarrow \mathcal J^1 (A)$ given by $a \mapsto 1 \otimes_R a \pmod{I^2}$ and called the \textbf{1-jet map}. The elements $j^1 (a) \in \mathcal J^1 (A)$ satisfy the Leibniz rule 
\begin{equation}
\label{leibniz J1}
 j^1  (a \cdot b ) - a \cdot j^1 (b) - b \cdot j^1 (a) + a \cdot b \cdot j^1 (1) = 0.
 \end{equation}
\end{proposition}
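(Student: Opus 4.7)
The plan is to verify two separate claims: generation of $\mathcal J^1(A)$ as an $A$-module by the image of $j^1$, and the Leibniz-type relation.

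For the generation statement, I would argue directly from the definition: any element of $\mathcal J^1(A)=(A\otimes_R A)/I^2$ is an $R$-linear combination of classes $a\otimes_R b\pmod{I^2}$. Recall that the $A$-module structure on $\mathcal J^1(A)$ comes from left multiplication on the first tensor factor (inherited from $A\otimes_R A$), so $a\otimes_R b\equiv a\cdot(1\otimes_R b)=a\cdot j^1(b)\pmod{I^2}$. Hence every class lies in $A\cdot j^1(A)$, which proves the generation claim.

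For the Leibniz rule, the key observation is that the elements $1\otimes_R a-a\otimes_R 1$ and $1\otimes_R b-b\otimes_R 1$ both lie in $I$, since the multiplication map sends each to zero. Therefore their product lies in $I^2$. I would then expand this product in $A\otimes_R A$:
\begin{equation*}
(1\otimes_R a-a\otimes_R 1)(1\otimes_R b-b\otimes_R 1)=1\otimes_R (a\cdot b)-b\otimes_R a-a\otimes_R b+(a\cdot b)\otimes_R 1,
\end{equation*}
using the component-wise multiplication in $A\otimes_R A$. Reducing modulo $I^2$ and rewriting in terms of $j^1$ via $a\otimes_R b\equiv a\cdot j^1(b)$, this becomes exactly
\begin{equation*}
j^1(a\cdot b)-b\cdot j^1(a)-a\cdot j^1(b)+(a\cdot b)\cdot j^1(1)=0,
\end{equation*}
which is the desired identity.

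There is no real obstacle here: once one has identified $\mathcal J^1(A)$ with $(A\otimes_R A)/I^2$ and noted that the canonical generators $1\otimes_R a-a\otimes_R 1$ of $I$ are what produces the Leibniz defect, both claims are immediate from a single calculation in $A\otimes_R A$. The only care needed is to keep straight which tensor factor carries the $A$-action (the left one) so that the formula $a\otimes_R b=a\cdot j^1(b)$ is correct, rather than $b\cdot j^1(a)$.
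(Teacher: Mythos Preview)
Your proof is correct and follows essentially the same approach as the paper: for generation you use $a\otimes_R b = a\cdot(1\otimes_R b)$ to write every class as an $A$-combination of $j^1(b)$'s, and for the Leibniz rule you expand the product $(1\otimes_R a - a\otimes_R 1)(1\otimes_R b - b\otimes_R 1)\in I^2$ and read off the identity. The paper's proof is identical in substance, just slightly more terse.
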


\begin{proof}
   For an element $\sum a_i  \otimes_R b_i \in A \otimes_R A$ we have $ \sum a_i \otimes_R b_i = \sum a_i  \cdot \left( 1 \otimes_R b_i \right) $. Since  we have $1 \otimes_R b_i \pmod{I^2} = j^1 \left( b_i \right)$ we deduce that any element in $\mathcal J^1 (A) = \left( A \otimes_R A \right) / I^2$ is of  the form $\sum a_i \cdot j^1 \left( b_i \right)$. Hence we deduce  that, as an $A$-module, $\mathcal J^1 (A)$ is generated  by   $j^1 (a) \in \mathcal J^1 (A)$ for all $a \in A$.  

As  $
1 \otimes_R a \cdot b - a \otimes_R b - b \otimes_R a + a \cdot b \otimes_R 1  =  \left( 1 \otimes_R a - a \otimes_R 1 \right) \cdot \left( 1 \otimes_R b - b \otimes_R 1 \right) \in I^2 $,  the Leibniz rule in \eqref{leibniz J1} holds. %:
\end{proof}
Note that  the isomorphism $(A \otimes_R A) / I^2 \cong A \oplus \Omega^1 (A)$ given in \eqref{canonical iso}  identifies  $a \cdot j^1 (b) \in \mathcal J^1 (A) $ with $( a \cdot b , a \cdot db ) \in A \oplus \Omega^1 (A)$ for all $a , b \in A$.

\medskip

Our next aim is to endow the $A$-module  $A \otimes  A$, where $A$ is a Jacobi algebra over a field $k$, with a Lie bracket, denoted $[ \bullet , \bullet ]_{A \otimes  A}$, and an $A$-linear Lie algebra map from $A \otimes  A$ to $\mathrm{Der}_k (A)$ compatible with the bracket $[ \bullet , \bullet ]_{A \otimes  A}$ so that    the pair $(A , A \otimes  A)$ is a canonical Lie--Rinehart algebra over $A$. 

\begin{lemma}
\label{AA}
Let $(A , \{ \bullet , \bullet \}_J)$ be a Jacobi algebra over a field $k$, the pair $(A, A \otimes  A)$ is a Lie--Rinehart algebra with anchor 
\begin{equation}\label{anchorAA}
\rho_{A \otimes  A}: A \otimes  A \longrightarrow \mathrm{Der}_k (A), \quad   a \otimes  b \longmapsto a \cdot \Phi_b
\end{equation}
 and Lie bracket on $A \otimes  A$ given by 
\begin{equation}
\label{bracketAA}
[a \otimes  f , b \otimes  g ]_{A \otimes  A} = a \cdot b \otimes  \{ f , g \}_J + a \cdot \Phi_f (b) \otimes  g - b \cdot \Phi_g (a) \otimes  f.
\end{equation}
\end{lemma}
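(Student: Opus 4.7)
The plan is to verify the four standard axioms of a Lie--Rinehart algebra for $(A, A \otimes A)$ with the prescribed anchor $\rho_{A \otimes A}$ and bracket $[\bullet,\bullet]_{A \otimes A}$. The left $A$-module structure on $A \otimes A$ is the obvious one (action on the first tensor factor), and since we work with $A \otimes A$ itself (not a quotient) the formulas on the right-hand side of \eqref{anchorAA} and \eqref{bracketAA} define maps by bilinearity, so no well-definedness check is needed.

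First I would deal with the anchor. The $A$-linearity of $\rho_{A \otimes A}$ is immediate from the formula $c \cdot (a \otimes b) \mapsto ca \cdot \Phi_b = c \cdot (a \cdot \Phi_b)$. To see that $\rho_{A \otimes A}$ is a Lie algebra homomorphism, I expand the commutator of two derivations $a \Phi_f$ and $b \Phi_g$ in $\mathrm{Der}_k(A)$: using the Leibniz rule for $\Phi_f$ one obtains
\begin{equation*}
[a \Phi_f, b \Phi_g]_{\mathrm{Der}_k(A)} = a \cdot \Phi_f(b) \cdot \Phi_g - b \cdot \Phi_g(a) \cdot \Phi_f + a \cdot b \cdot [\Phi_f, \Phi_g]_{\mathrm{Der}_k(A)},
\end{equation*}
and the fact that $\Phi: A \to \mathrm{Der}_k(A)$ is a Lie algebra map (proved in the proposition following \eqref{equation: Hamiltonian Jacobi}) lets me replace $[\Phi_f, \Phi_g]_{\mathrm{Der}_k(A)}$ by $\Phi_{\{f,g\}_J}$, yielding exactly $\rho_{A \otimes A}([a \otimes f, b \otimes g]_{A \otimes A})$.

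Next I would verify that $[\bullet, \bullet]_{A \otimes A}$ is a Lie bracket. Antisymmetry is an immediate bookkeeping check, since the middle two terms of \eqref{bracketAA} swap roles (up to sign) and the first term uses antisymmetry of the Jacobi bracket. The Leibniz rule \eqref{LRLR} is also a short direct computation: expanding $[a \otimes f, c \cdot (b \otimes g)]_{A \otimes A} = [a \otimes f, cb \otimes g]_{A \otimes A}$ and using the Leibniz property of the derivation $\Phi_f$ on the product $cb$ collects terms into $c \cdot [a \otimes f, b \otimes g]_{A \otimes A} + a \cdot \Phi_f(c) \cdot (b \otimes g)$, and the second summand is $\rho_{A \otimes A}(a \otimes f)(c) \cdot (b \otimes g)$, as required.

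The hard part will be the Jacobi identity for $[\bullet, \bullet]_{A \otimes A}$, which is the only lengthy piece. I would expand the triple commutator over three generic elements $a \otimes f, b \otimes g, c \otimes h$ into terms of the form $(\text{coefficient in } A) \otimes (\text{bracket or iterated action in } A)$. The terms whose second tensor factors are nested Jacobi brackets $\{\{f, g\}_J, h\}_J$ and cyclic permutations cancel by the Jacobi identity for $\{\bullet, \bullet\}_J$. The remaining terms, whose second tensor factors are one of $f, g, h$, will cancel cyclically using (i) the derivation property of $\Phi_f, \Phi_g, \Phi_h$ on products in the first tensor slot and (ii) the identity $\Phi_{\{f,g\}_J} = [\Phi_f, \Phi_g]_{\mathrm{Der}_k(A)}$ to match up the second-order terms such as $a \cdot \Phi_f(\Phi_g(c)) \otimes h$ against the $\Phi_{\{f,g\}_J}$-contribution arising from the anchor of $[a \otimes f, b \otimes g]_{A \otimes A}$. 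Organizing these terms by which slot carries each generator is the only real bookkeeping obstacle, and once the cancellations are grouped cyclically they match term by term.
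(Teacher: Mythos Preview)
Your proposal is correct and follows essentially the same route as the paper: a direct verification of the Lie--Rinehart axioms (skew-symmetry, Jacobi identity, $A$-linearity of the anchor, the anchor being a Lie algebra map, and the Leibniz compatibility \eqref{LRLR}), using throughout that $\Phi$ is a Lie algebra homomorphism and that each $\Phi_f$ is a derivation. The paper in fact gives fewer details than you do on the Jacobi identity (it simply asserts it follows by a brief computation), so your outline of how the cyclic cancellations organize is a welcome elaboration rather than a deviation.
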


\begin{proof}
The bracket in  \eqref{bracketAA} is skew-symmetric and by a brief computation we see  that it satisfies the Jacobi identity. We now check that the $A$-linear map in \eqref{anchorAA} is a Lie algebra map:
\begin{align*}
\rho_{A \otimes  A} \left( [ a \otimes  f , b \otimes  g ]_{ A \otimes  A } \right) & = \rho_{A \otimes  A}  \left( a \cdot b \otimes  \{ f , g \}_J + a \cdot \Phi_f (b) \otimes  g - b \cdot \Phi_g (a) \otimes  f  \right)  \\
 & = a \cdot b \cdot \Phi_{  \{ f , g \}_J } + a \cdot \Phi_f (b) \cdot \Phi_g - b \cdot \Phi_g (a) \cdot \Phi_f  \\
 & = a \cdot b \cdot [ \Phi_f , \Phi_g]_{\mathrm{Der}_k (A)}   + a \cdot \Phi_f (b) \cdot \Phi_g - b \cdot \Phi_g (a) \cdot \Phi_f   \\
 & =  [a \cdot \Phi_f , b \cdot \Phi_g ]_{\mathrm{Der}_k (A)} \\
 &   = [ \rho_{A \otimes  A} ( a \otimes  f ) , \rho_{A \otimes  A} ( b \otimes  g ) ]_{\mathrm{Der}_k (A)}.
\end{align*}
Lastly, we show that  the bracket in \eqref{bracketAA} is  compatible with the anchor in \eqref{anchorAA} since the Leibniz rule in \eqref{LRLR} is satisfied:
\begin{align*}
& [ a \otimes  f , b \cdot c \otimes  g ]_{A \otimes  A}  = a \cdot b \cdot c \otimes  \{ f , g \}_J + a \cdot \Phi_f ( b \cdot c ) \otimes  g - b \cdot c \cdot \Phi_g (a) \otimes  f \\
 & = a \cdot b \cdot c \otimes  \{ f , g \}_J + a \cdot b \cdot \Phi_f (c) \otimes  g + a \cdot c \cdot \Phi_f (b) \otimes  g - b \cdot c \cdot \Phi_g (a) \otimes  f \\
 & = b \cdot (a \cdot c \otimes  \{ f , g \}_J + a \cdot \Phi_f (c) \otimes  g - c \cdot \Phi_g(a) \otimes  f) + a \cdot c \cdot \Phi_f (b) \otimes  g \\
 & = b \cdot [a \otimes   f , c \otimes  g ]_{A \otimes  A} +  \rho_{A \otimes  A} ( a \otimes  f)  (b) \cdot ( c \otimes  g). \qedhere
\end{align*}
%\vspace{-20pt}
%
\end{proof}

We are now ready to prove that the pair $( A , \mathcal J^1 (A) )$ admits a Lie--Rinehart algebra structure:   the Lie bracket $ [ \bullet , \bullet ]_{ A \otimes  A}$ on $A \otimes  A$ in \eqref{bracketAA} and the anchor map given by $\rho_{A \otimes  A}: A \otimes A \rightarrow \mathrm{Der}_k (A)$ in \eqref{anchorAA} descend respectively to a Lie bracket on $\mathcal J^1 (A)$ and to an $A$-linear Lie algebra map from $\mathcal J^1 (A)$ to $\mathrm{Der}_k (A)$  which are compatible with each other since they satisfy the Leibniz rule   \eqref{LRLR}.

\begin{proof}[Proof of Theorem \ref{Jacobi flat} Part 1]  
The Lie bracket $[ \bullet , \bullet ]_{ A \otimes  A}$ on $A \otimes  A$ given in \eqref{bracketAA} 
descends to the bracket $[ \bullet , \bullet ]_{ \mathcal J^1 (A) }$ given in \eqref{equation: Bracket J} if and only if it maps the $A$-module $A \otimes  A \otimes I^2 + I^2 \otimes  A \otimes  A$ to $I^2$. Since $[ \bullet , \bullet ]_{ A \otimes   A}$ is  a skew-symmetric bracket, it is enough to check that it maps $A \otimes  A \otimes  I^2$ to $I^2$.    Let  $a_1 \otimes  b_1 \in A \otimes  A  $ and $ \sum  a_i \otimes  b_i , \sum f_j \otimes  g_j  \in I $ so that $\sum a_i \cdot b_i = \sum f_j \cdot g_j = 0$.    
Then we have $ (a_1 \otimes   b_1 )  \otimes  \left(  \left( \sum a_i  \otimes  b_i \right) \cdot \left( \sum f_j  \otimes   g_j   \right)  \right)  \in  A \otimes  A  \otimes   I^2    $   which under  $ [ \bullet , \bullet ]_{ A \otimes  A}$
becomes
% 
%
%\vspace{-11pt}
%\begin{small}
        \begin{align*}
 & \left[ a_1 \otimes  b_1   , \left( \sum a_i \otimes  b_i \right) \cdot \left( \sum f_j \otimes  g_j \right)  \right]_{ A \otimes  A} =   \left[  a_1  \otimes  b_1 , \sum a_i \cdot f_j \otimes  b_i \cdot g_j   \right]_{ A \otimes  A} \\
 & =   a_1 \cdot  \left( \sum  a_i \otimes  b_i \right)  \cdot \left(  \sum  f_j \otimes   \{ b_1 , g_j \}_J \right) + a_1 \cdot \left( \sum f_j \otimes   g_j \right) \cdot \left( \sum  a_i \otimes  \{ b_1 , b_j \}_J \right)   \\
 & \phantom{={}} + a_1 \cdot  \left( \sum a_i \otimes  b_i \right) \cdot \left( \sum  \{ b_1 , f_j \}_J \otimes  g_j \right) + a_1 \cdot \left( \sum f_j \otimes  g_j \right) \cdot \left(  \sum \{ b_1 , a_i \}_J \otimes  b_i \right) \\
& \phantom{={}} + a_1 \cdot \left( \sum a_i \otimes b_2 \right) \cdot \left( \sum f_j \otimes g_j \right) \cdot ( 1 \otimes \{ 1 , b_1 \}_J + 2 \{ 1 , b_1 \}_J \otimes 1 )  \in I^2 
    \end{align*}
 % \end{small}
%\vspace{-11pt}
since we have $ \sum  a_i \otimes  b_i \in I$, $ \sum f_j \otimes  g_j \in I$,  $ \sum(  f_j \otimes  \{ b_1 , g_j \}_J + \{ b_1 , f_j \}\otimes  g_j ) \in I$ and  $ \sum ( a_i \otimes  \{ b_1 , b_i \}_J + \{ b_1 , a_i \}_J \otimes  b_i ) \in I$.
Hence  $ A \otimes  A \otimes   I^2  +  I^2 \otimes  A \otimes  A $  is mapped to $ I^2$ under $[ \bullet , \bullet ]_{ A \otimes  A}$  and consequently  $[ \bullet , \bullet ]_{ A \otimes  A}$      descends to the bracket  $[ \bullet , \bullet ]_{ \mathcal  J^1 (A) }$  in   \eqref{equation: Bracket J}. 
A  straightforward computation shows that  $[ \bullet , \bullet ]_{ \mathcal  J^1 (A) }$ also satisfies the Jacobi identity, hence it is a Lie bracket.

\medskip

By a similar argument, we show that the map  $\rho_{ A  \otimes  A}$ in \eqref{anchorAA}  descends to the map   in \eqref{anchor J}.  Let $a \otimes  b, f \otimes  g \in I$ so that $a \cdot f \otimes  b \cdot g  \in I^2$, which under $\rho_{ A \otimes  A}$ becomes
\begin{equation*}
\rho_{ A \otimes  A} (a \cdot f \otimes  b \cdot g) = a \cdot f \cdot \Phi_{b \cdot g} = a \cdot f \cdot \{ b \cdot g , \bullet \}_J + a \cdot f \cdot \{ 1 , b \cdot g \}_J \cdot \bullet =  0.
\end{equation*} 
Since $\rho_{ A \otimes  A}$ maps $I^2$ to 0, we deduce that it descends to the map $\rho_{ \mathcal J^1}$.

\medskip

Moreover, a short computation similar to  one performed in the proof of Lemma \ref{AA} shows that the Leibniz rule in \eqref{LRLR} holds. 
Thus, the  Lie bracket $ [ \bullet , \bullet ]_{\mathcal J^1 (A)}$ in \eqref{equation: Bracket J}      and the anchor  $\rho_{\mathcal J^1}$  in \eqref{anchor J} turn $(A , \mathcal J^1 (A))$ into  a Lie--Rinehart algebra.
\end{proof}

\subsection{Relation of $(A , \mathcal J^1 (A))$ to the Lie algebroid over a Jacobi manifold}

\label{Jacobi section}

In this brief section we explain how our algebraic description, given in Theorem \ref{Jacobi flat}, of the canonical Lie--Rinehart algebra $(A, \mathcal J^1 (A))$ associated to a Jacobi algebra $(A , \{ \bullet , \bullet \}_J)$ is related to the geometric description of the Lie algebroid over a Jacobi manifold $M$ as given in \cite{ben}. 
\begin{definition}
Let $M$ be a smooth manifold equipped with a bivector field $\Lambda$ and a vector field $E$. The ring  $C^\infty (M)$ admits a Lie bracket, called a \textbf{Jacobi structure}, given by
\begin{equation}
\label{JKL}
\{ f, g \}_J = \Lambda \left( df , dg \right) + f \cdot E \left( g \right) - g \cdot E \left( f \right), \quad f , g \in C^\infty \left( M \right)
\end{equation} 
if and only if  $\Lambda$ and  $E$ satisfy    $ \llbracket \Lambda, \Lambda \rrbracket = 2 E \wedge \Lambda$ and $ \llbracket \Lambda, E \rrbracket = 0$,  where $\llbracket \bullet , \bullet \rrbracket$ is the Schouten bracket. Then $(M, \Lambda, E)$ is called a \textbf{Jacobi manifold}.

\end{definition}

In order to construct the canonical Lie algebroid associated to a Jacobi manifold $(M, \Lambda, E)$, Kerbrat and Souici-Benhammadi  \cite{ben} endow the bundle $J^1(M, \mathbb R)$ of 1-jets of smooth functions on the manifold $M$, which is isomorphic as a  $C^\infty (M)$-module to the direct sum $C^\infty (M) \oplus \Omega^1 (M)$ where $\Omega^1 (M)$ are the smooth differential 1-forms, with a Lie bracket given by $ [(f_1 , a_1 \cdot db_1) , ( f_2 , a_2 \cdot db_2)] = ( f , a \cdot d b) $
where 
\begin{align*}
f &  =  - \Lambda( a_1 \cdot db_1 , a_2 \cdot db_2) \\
 &   \phantom{={}} + \iota ( \Lambda^\# (a_1 \cdot db_1) + f_1 \cdot E) \cdot d f_2 - \iota (\Lambda^\# ( a_2 \cdot db_2) + f_2 \cdot E) \cdot df_1 \\
 a \cdot db & = \mathcal L ( \Lambda^\# (a_1 \cdot db_1) + f_1 \cdot  E) a_2 \cdot db_2 - \mathcal L ( \Lambda^\# ( a_2 \cdot db_2) + f_2 \cdot E) a_1 \cdot db_1 \\
 & \phantom{={}}  - \langle a_1 \cdot db_1 , E \rangle ( a_2 \cdot db_2 - df_2 )  + \langle a_2 \cdot db_2 , E \rangle ( a_1 \cdot db_1 - df_1)  \\
 &  \phantom{={}} - d (\Lambda(a_1 \cdot db_1 , a_2 \cdot db_2 )) 
\end{align*}
for $f, f_1, f_2 \in C^\infty(M)$ and $a \cdot db, a_1 \cdot db_1 , a_2 \cdot db_2 \in \Omega^1 (M)$, which we can write in terms of the Jacobi structure in \eqref{JKL} as
\begin{align*}
f & = - a_1 \cdot a_2 \cdot \{ b_1 , b_2 \}_J + a_1 \cdot a_2 \cdot b_1 \cdot \{ 1 , b_2 \}_J - a_1 \cdot a_2 \cdot b_2 \cdot \{ 1 , b_1 \}_J \\
 &  \phantom{={}}+ a_1 \cdot \{ b_1 , f_2 \}_J - a_1 \cdot b_1 \cdot \{ 1 , f_2 \}_J + a_1 \cdot f_2 \cdot \{ 1 , b_1 \}_J + f_1 \cdot \{ 1 , f_2 \}_J \\
 &  \phantom{={}} - a_2 \cdot \{ b_2 , f_1 \}_J  +  a_2 \cdot b_2 \cdot \{ 1 , f_1 \}_J - a_2 \cdot f_1 \cdot \{ 1 , b_2 \}_J - f_2 \cdot \{ 1 , f_1 \}_J   \\
a \cdot db & = a_1 \cdot a_2 \cdot d \{ b_1 , b_2 \}_J \\
 &  \phantom{={}} + ( a_2 \cdot \{ a_1 , b_2 \}_J - f_2 \cdot \{ 1 , a_1 \}_J + a_2 \cdot b_2 \cdot \{ 1 , a_1 \}_J - a_1 \cdot a_2 \cdot \{ 1 , b_2 \}_J ) \cdot db_1 \\
 &  \phantom{={}}  - ( a_1 \cdot \{ a_2 , b_1 \}_J - f_1 \cdot \{ 1 , a_2 \}_J + a_1 \cdot b_1 \cdot \{ 1 , a_2 \}_J - a_1 \cdot a_2 \cdot \{ 1 , b_1 \}_J ) \cdot db_2 \\
 &  \phantom{={}}  - ( a_1 \cdot f_2 - a_1 \cdot a_2 \cdot b_2 ) \cdot d \{ 1 , b_1 \}_J  + ( a_2 \cdot f_1 - a_1 \cdot a_2 \cdot b_1 ) \cdot d \{ 1 , b_2 \}_J .
\end{align*}
Now, take $A = C^\infty (M)$ in \eqref{equation: Bracket J}. By the isomorphism given in  \eqref{canonical iso} we can identify an element    $a \cdot j^1 (b) + (f - a \cdot b  ) \cdot j^1 (1) \in \mathcal J^1 (C^\infty(M))$ with  $( f , a \cdot db) \in C^\infty (M) \oplus \Omega^1 ( C^\infty(M) )$. By the universal property of   K\"ahler differentials, a straightforward computation shows that  the bracket (on the algebraic K\"ahler differentials) given in   \eqref{equation: Bracket J}  \emph{descends} to the bracket (on differential forms)  defined in \cite{ben}, so that both constructions are in fact compatible.

\medskip

Furthermore, since elements   $a \cdot j^1 (b) \in \mathcal J^1 ( C^\infty (M) )$ are identified, as above, with elements  $(a\cdot b , a \cdot db ) \in C^\infty (M) \oplus \Omega^1 ( M)$, we see that the anchor map we defined in \eqref{anchor J} yields the anchor defined  in \cite{ben}, that is  a map $ \rho : J^1 (M, \mathbb R) \rightarrow TM$ given by $( f, a \cdot db ) \mapsto \Lambda^\# ( a \cdot db) + f \cdot E$.

\subsection{The Jacobi algebra $(A, \{ \bullet , \bullet \}_J)$ as a right $(A, \mathcal J^1 (A))$-module }

We are now ready to prove the main result stated in Theorem \ref{Jacobi flat}, which we recall here:

\begin{theorem}
\label{thm 3.6}
Let $( A , \mathcal J^1 (A))$ be the canonical Lie--Rinehart algebra associated to a Jacobi algebra $( A , \{ \bullet , \bullet \}_J)$. 
 The map 
\begin{equation}
\label{connection2}
\varphi_{\mathcal  J^1  }: \mathcal J^1 (A) \longrightarrow A , \quad a \cdot j^1 (b) \longmapsto   \{ a , b \}_J 
\end{equation}
is a flat right $(A, \mathcal J^1 (A) )$-connection character on $A$.

\end{theorem}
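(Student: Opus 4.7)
The plan is to verify three things in sequence: (a) $\varphi_{\mathcal J^1}$ is well defined as a map out of the quotient $\mathcal J^1(A) = (A \otimes A)/I^2$; (b) it satisfies the twisted Leibniz rule \eqref{equation: right connection character axi} characterising right connection characters; and (c) the induced right connection is flat. Once these are in place, Proposition~\ref{proposition: connection characters} immediately produces the flat right $(A, \mathcal J^1(A))$-connection on $A$ asserted by the theorem.

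For (a), I would introduce the $k$-linear extension $\tilde\varphi: A \otimes A \to A$, $a \otimes b \mapsto \{a, b\}_J$, and show it vanishes on $I^2$. Since $I$ is generated as an $A \otimes A$-ideal by $\{1 \otimes a - a \otimes 1 : a \in A\}$, the ideal $I^2$ is generated by the products $(1 \otimes a - a \otimes 1)(1 \otimes b - b \otimes 1)$, so it suffices to check that $\tilde\varphi$ vanishes on any element of the form $(u \otimes v) \cdot (1 \otimes a - a \otimes 1)(1 \otimes b - b \otimes 1) = u \otimes vab - ub \otimes va - ua \otimes vb + uab \otimes v$. Repeated application of the Leibniz rule \eqref{equation: generalized leibniz} in both slots (using skew-symmetry) produces a term-by-term cancellation. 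The simplest instance $u = v = 1$ already displays the mechanism: $\{1, ab\}_J - \{a, b\}_J - \{b, a\}_J + \{ab, 1\}_J = 0$.

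For (b), by $k$-linearity and the fact that $\mathcal J^1(A)$ is $A$-generated by the $j^1(b)$, it is enough to check the axiom on $\xi = j^1(b)$, i.e.\ $\varphi_{\mathcal J^1}(a \cdot j^1(b)) = a \cdot \varphi_{\mathcal J^1}(j^1(b)) - \rho_{\mathcal J^1}(j^1(b))(a)$. The right-hand side expands as $a \cdot \{1, b\}_J - \Phi_b(a) = a \cdot \{1, b\}_J - \{b, a\}_J - \{1, b\}_J \cdot a = -\{b, a\}_J$, which matches $\{a, b\}_J$ by skew-symmetry.

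For (c), Lemma~\ref{theorem: A-linear} reduces the curvature to its value on generators at $1_A$. Substituting \eqref{anchor J}, \eqref{equation: Bracket J}, and \eqref{connection}, and using $\Phi_f(\{1, g\}_J) = \{f, \{1, g\}_J\}_J + \{1, f\}_J \cdot \{1, g\}_J$ together with its counterpart for $\Phi_g$, the terms $\pm \{1, f\}_J \cdot \{1, g\}_J$ cancel by commutativity of $A$, leaving
\begin{equation*}
\Cr(j^1(f), j^1(g)) = -\{f, \{1, g\}_J\}_J + \{g, \{1, f\}_J\}_J + \{1, \{f, g\}_J\}_J,
\end{equation*}
which vanishes by the Jacobi identity applied to the triple $(1, f, g)$. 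The main obstacle is step (a): although the cancellation is entirely mechanical, keeping track of all Leibniz contributions, especially those involving the non-Poisson term $\{1, \bullet\}_J$, requires careful bookkeeping. Steps (b) and (c) reduce to a single use of skew-symmetry and of the Jacobi identity, respectively.
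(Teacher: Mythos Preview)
Your proposal is correct and follows essentially the same three-step strategy as the paper: lift to $A\otimes A$ and show the Jacobi bracket kills $I^2$, verify the twisted Leibniz rule \eqref{equation: right connection character axi}, then compute the curvature via Lemma~\ref{theorem: A-linear} and reduce to the Jacobi identity on $(1,f,g)$. The only cosmetic difference is that in step~(a) the paper works with two arbitrary elements $\sum a_i\otimes b_i,\ \sum f_j\otimes g_j\in I$ and expands their product directly, whereas you parametrise $I^2$ by $(u\otimes v)(1\otimes a-a\otimes 1)(1\otimes b-b\otimes 1)$; both computations amount to the same repeated use of \eqref{equation: generalized leibniz}.
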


\begin{proof}
We first check that the  map     \eqref{connection2}  is well-defined. The map $ \gamma : A \otimes  A \rightarrow A$ given by $a \otimes  b \mapsto     \{ a , b \}_J  $ induces the map  in   \eqref{connection2} if and only if $\gamma \left(  I^2 \right) = 0$. Let $ \sum a_i \otimes  b_i ,  \sum f_j \otimes  g_j \in I$, so that $\sum a_i \cdot b_i = \sum f_j \cdot g_j = 0$.   Then, we have $ \left( \sum a_i \otimes  b_i \right) \cdot  \left( \sum f_j \otimes  g_j \right) =  \sum a_i \cdot f_j \otimes  b_i \cdot g_j \in  I^2$  which under the map $\gamma : A \otimes  A  \rightarrow A $ becomes 
%
%\vspace{-9pt}
%\begin{footnotesize}
\begin{align*} 
\gamma & \left( \sum a_i \cdot f_j \otimes  b_i \cdot g_j \right) = \sum  \{ a_i \cdot f_j , b_i \cdot g_j \}_J  \\
 &   = \sum a_i \cdot g_j \cdot \{ f_j , b_i \}_J + \sum f_j \cdot b_i \cdot \{ a_i , g_j \}_J + \sum  a_i \cdot f_j \cdot \{g_j , b_i \}_J - \sum  a_i \cdot f_j \cdot \{ g_j , b_i \}_J \\
 & = \sum  a_i \cdot \{ f_j \cdot g_j , b_i \}_J + \sum f_j \cdot \{ a_i \cdot b_i , g_j \}_J =0
 \end{align*}
%\end{footnotesize}
%
%\vspace{-11pt}
%
so $ \varphi_{ \mathcal J^1  } : \mathcal J^1 (A) \rightarrow A$ is well-defined.    We now prove the map $\varphi_{ \mathcal J^1  } : \mathcal J^1  (A) \rightarrow A$ is a right $\left( A , \mathcal J^1 (A)  \right)$-connection character on $A$. 
Let $\sum a_i \cdot j^1 \left( b_i \right)  \in \mathcal J^1 (A)$, then we have
\begin{align*} 
\varphi_{ \mathcal J^1  } \left( \sum c \cdot a_i \cdot j^1 (b_i ) \right)  & = \sum \{ c \cdot a_i, b_i \}_J  \\
 & = \sum ( c \cdot \{ a_i , b_i \}_J  - a_i \cdot \{ b_i , c  \}_J - c \cdot a_i \cdot \{ 1 , b_i \}_J ) \\
 & = c \cdot \varphi_{ \mathcal J^1} \left( \sum a_i \cdot j^1 (b_i) \right) - \rho_{ \mathcal J^1 } \left( \sum  a_i \cdot j^1 (b_i) \right) ( c ) 
\end{align*}
so $\varphi_{ \mathcal J^1}$ satisfies \eqref{equation: right connection character axi}. Lastly, using  the identity   \eqref{equation: C_r 1} in Lemma \ref{theorem: A-linear}, we  compute the curvature of the $(A, \mathcal J^1  (A) )$-connection (character) on $A$ given in \eqref{connection2}:
\begin{small}
\begin{align*}
\Cr  \left( j^1 (a) , j^1 (b) \right)   & = - \rho_{  \mathcal J^1   } \left( j^1 (a) \right) \left( \varphi_{  \mathcal J^1    } \left( j^1 (b) \right) \right)  +\rho_{ \mathcal J^1  }  \left( j^1 (b) \right)  \left( \varphi_{  \mathcal J^1    } \left( j^1 (a) \right) \right) + \varphi_ {\mathcal J^1 } \left( [ j^1 (a) , j^1  (b) ]_{ \mathcal J^1 (A) }     \right)  \\
       & = -  \Phi_a \left( \{ 1 , b \}_J  \right) + \Phi_b \left( \{ 1 , a \}_J  \right)  + \varphi_{   \mathcal J^1} \left(  j^1 \{ a , b \}_J    \right)   \\
%  & \\
 & = - \{ a , \{ 1 , b \}_J   \}_J - \{ 1 , a \}_J  \cdot \{ 1 , b \}_J    +  \{ b ,  \{ 1 , a \}_J   \}_J +  \{ 1 , b \}_J \cdot \{ 1 , a \}_J    + \{ 1 ,  \{ a , b \}_J \}_J \\
 &     = 0  
 \end{align*}
\end{small}
\vspace{-10pt}
\noindent for all $a , b \in A$.
\end{proof}

\section{Other quotient Lie--Rinehart algebras of $(A, A \otimes A)$  }
\label{section: ingredients}

In the previous section we considered $(A, \mathcal J^1 (A))$ as a quotient of the Lie--Rinehart algebra $(A, A \otimes  A)$  associated to a Jacobi algebra over a field $k$. In  this section we construct new quotient Lie--Rinehart algebras  of $(A , A \otimes  A)$.

\subsection{Quotient Lie--Rinehart algebras}

\begin{lemma}
\label{quotient LR}
Let $(A,L)$ be a Lie--Rinehart algebra with Lie bracket on $L$ denoted by $[ \bullet , \bullet ]_L$ and anchor $\rho_L$. For  $h \in A$,  define $ \mu_h : L \rightarrow L$  by $ \zeta \mapsto h \cdot \zeta$ for $\zeta \in L$ and put $M := \mathrm{Im} ( \mu_h) = h L $, $K = \mathrm{Ker} ( \mu_h) = \{ \xi \in L \mid h \cdot \xi = 0 \}$. The pair $(A, M)$ admits a  Lie--Rinehart algebra structure with Lie bracket on $M$ given by 
\begin{equation}
\label{bracket K}
[ h \cdot \zeta , h \cdot \gamma ]_M := h \cdot [ \zeta, \gamma  ]_L, \quad \zeta , \gamma \in L,
\end{equation}
and anchor 
\begin{equation}
\label{anchor K}
\rho_M: (M, [ \bullet , \bullet ]_M)  \longrightarrow \mathrm{Der}_k (A), \quad  \rho_M ( h \cdot \zeta ) := \rho_L ( \zeta), \quad   \zeta \in L
\end{equation}
turning the  map  
\begin{equation}
\mu_h : ( L , [ \bullet  , \bullet  ]_L ) \longrightarrow ( M , [ \bullet , \bullet ]_M ), \quad \zeta  \longmapsto h \cdot \zeta  
\end{equation}
into a  Lie--Rinehart algebra homomorphism   if and only if 
\begin{enumerate}
\item $ \rho_L( \xi )  =0$, for all $\xi \in K$, \label{assumption 1}
\item $K$ is a Lie ideal in $L$, i.e., $ h \cdot [\xi , \bullet]_L = 0$, for all $\xi \in K$.

\end{enumerate}

\end{lemma}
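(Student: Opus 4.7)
The plan is to observe that the lemma reduces entirely to well-definedness of the formulas \eqref{bracket K} and \eqref{anchor K} as maps on $M = hL$: once they are well-defined, all Lie--Rinehart axioms on $(A,M)$ and the homomorphism property of $\mu_h$ will be immediate transcriptions of the corresponding statements in $(A,L)$, via the $A$-module isomorphism $L/K \cong M$ induced by $\mu_h$. Note first that $M$ is genuinely an $A$-submodule of $L$, since commutativity of $A$ gives $a\cdot(h\gamma) = h\cdot(a\gamma) \in hL$.

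For the direction $(\Leftarrow)$, I would first show that (1) is exactly what is needed for $\rho_M$ to be well-defined on $M$, since $h\zeta = h\zeta'$ holds precisely when $\zeta-\zeta' \in K$, and (2) together with skew-symmetry of $[\bullet,\bullet]_L$ is exactly what is needed for $[\bullet,\bullet]_M$ to be well-defined in both slots. The remaining axioms, namely skew-symmetry, the Jacobi identity, $A$-linearity of $\rho_M$, the Lie-algebra homomorphism property $\rho_M([h\zeta,h\gamma]_M) = [\rho_M(h\zeta),\rho_M(h\gamma)]_{\mathrm{Der}_k(A)}$, and the Leibniz rule
\[
[h\zeta, a\cdot h\gamma]_M = a\cdot [h\zeta,h\gamma]_M + \rho_M(h\zeta)(a)\cdot h\gamma,
\]
then follow by direct computation from the axioms on $(A,L)$; for the Leibniz rule in particular, one rewrites $a\cdot h\gamma = h\cdot(a\gamma)$, expands using \eqref{LRLR} for $(A,L)$, and extracts an overall factor of $h$. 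The two properties defining a Lie--Rinehart homomorphism for $\mu_h$, namely $\mu_h([\zeta,\gamma]_L) = [\mu_h(\zeta),\mu_h(\gamma)]_M$ and $\rho_M\circ\mu_h = \rho_L$, are built into \eqref{bracket K} and \eqref{anchor K} respectively.

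For the converse $(\Rightarrow)$, assume the formulas genuinely produce a Lie--Rinehart structure turning $\mu_h$ into a Lie--Rinehart homomorphism. Then for every $\xi\in K$, the relation $h\xi = 0$ forces $\rho_L(\xi) = \rho_M(h\xi) = \rho_M(0) = 0$, giving (1), and $h\cdot[\xi,\gamma]_L = [h\xi,h\gamma]_M = [0,h\gamma]_M = 0$ for every $\gamma\in L$, giving (2).

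The only point that needs a moment of care is the well-definedness of $[\bullet,\bullet]_M$: condition (2) is stated only in a single slot, so one must separately argue that the other slot also descends, but this is automatic from skew-symmetry of $[\bullet,\bullet]_L$. No genuine obstacle arises and the remainder of the proof is routine bookkeeping.
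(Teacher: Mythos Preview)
Your proposal is correct and follows essentially the same approach as the paper: both directions hinge on the well-definedness of \eqref{bracket K} and \eqref{anchor K}, with conditions (1) and (2) supplying exactly the vanishing needed on $K = \mathrm{Ker}(\mu_h)$, after which the Lie--Rinehart axioms and the homomorphism property of $\mu_h$ transfer directly from $(A,L)$. Your explicit remark that skew-symmetry handles well-definedness in the second slot of the bracket is a point the paper leaves implicit, but otherwise the arguments coincide.
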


\begin{proof}
Assume first that $(A , L )$ and $h \in A$ satisfy conditions (1) and (2) above. 
The bracket $[ \bullet , \bullet ]_M : M \otimes M \rightarrow M$, defined by $[ \bullet , \bullet ]_M \circ ( \mu_h \otimes \mu_h ) = \mu_h \circ [ \bullet , \bullet ]_L$, is well-defined since we have  $K = \mathrm{Ker} ( \mu_h)$ and $[ K , M ]_L \subset K$ by condition (2).  Moreover, $[ \bullet , \bullet ]_M$ is skew-symmetric and satisfies the Jacobi identity since $[ \bullet , \bullet ]_L$ does. Furthermore, we have   $\rho_M ( \theta) = \rho_M ( h \cdot \xi' ) = \rho_L ( \xi')$  which  vanishes by our assumptions, so the map $\rho_M : M \rightarrow \mathrm{Der}_k (A)$ is well-defined.  We next check that  $\rho_M$ is a Lie algebra map:
\begin{equation*}
\begin{split}
\rho_M ( [ h \cdot \zeta , h \cdot \gamma ]_M )  & = \rho_M ( h \cdot [ \zeta , \gamma ]_L )  = \rho_L ( [ \zeta , \gamma ]_L ) ) = [ \rho_L ( \zeta) , \rho_L ( \gamma) ]_L \\
 & =  [ \rho_M ( h \cdot \zeta) , \rho_M ( h \cdot \gamma)  ]_{\mathrm{Der}_k (A)}.
\end{split}
\end{equation*}
We finally  check that $[ \bullet , \bullet ]_M$ is compatible with $\rho_M$ since  the Leibniz rule in \eqref{LRLR} is satisfied:
\begin{equation*}
\begin{split}
[ h \cdot \zeta , a \cdot h \cdot \gamma  ]_M  & = h \cdot [ \zeta , a \cdot \gamma ]_L = h \cdot a \cdot [ \zeta , \gamma ]_L + h \cdot \rho_L ( \zeta ) ( a ) \cdot \gamma \\
 & = a \cdot [ h \cdot \zeta ,  h \cdot \gamma ]_M + \rho_M ( h \cdot \zeta) ( a ) \cdot (h \cdot \gamma) .
\end{split}
\end{equation*}
Conversely,  assume    $M$ admits the Lie--Rinehart algebra structure with bracket on $M$ given by \eqref{bracket K} and anchor given by \eqref{anchor K}. 
 Since for all $\xi \in K$ we have $ \rho_M ( h \cdot \xi ) = 0$, by \eqref{anchor K} we deduce $\rho_L ( \xi ) = 0$.
 Moreover, for elements $\xi \in K$ and $\gamma \in L$ we have $ [ h \cdot \xi , h \cdot \gamma   ]_M = 0 = h \cdot [ \xi , \gamma ]_L$ so $[ \xi , \gamma ]_L \in K$ for all $\gamma \in L$, so that  $K$ is a Lie ideal in $L$.  
\end{proof}

Note that since  $[ h \cdot \xi , \gamma_1 ]_L = 0$ for all $\xi \in K$,  by the Leibniz rule in  \eqref{LRLR} we deduce that $ \rho_L ( \gamma_1  ) (h ) \cdot \xi = 0$ for all $\gamma_1 \in L$ so that $  [ \rho_L ( \gamma_1 ) (h ) \cdot \xi , \gamma_2 ]_L = 0$ and $\rho_L ( \gamma_2 ) \circ \rho_L ( \gamma_1) (h) \cdot \xi = 0$. Repeating this iteration process we deduce   $ \rho_L ( \gamma_i ) \circ \cdots \circ \rho_L ( \gamma_1 ) ( h) \cdot \xi = 0$.

\medskip

We will base the proof of Theorem \ref{main} Part 2 (a) on the following observation:

\begin{lemma}
\label{fundamental}
 If there exist some $\zeta \in L$, $\xi \in K$  such that $\xi = a \cdot \zeta$ for some $a \in A$, and no $b \in A$ satisfying $a \cdot b = \rho_L ( \zeta ) (a)$, then there exists no right $(A,M)$-connection on $A$.

\end{lemma}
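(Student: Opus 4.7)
The plan is to argue by contrapositive: assume a right $(A,M)$-connection on $A$ does exist, and use it to produce precisely the element $b \in A$ that is forbidden by hypothesis. By Proposition \ref{proposition: connection characters}, such a connection is equivalent to a right connection character $\dr : M \rightarrow A$ satisfying
\begin{equation*}
\dr(c \cdot \theta) \;=\; c \cdot \dr(\theta) - \rho_M(\theta)(c), \qquad c \in A,\; \theta \in M.
\end{equation*}

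The key observation is that the two hypotheses on $\xi$ give a nontrivial relation inside $M$. Since $\xi \in K$ one has $h \cdot \xi = 0$ in $L$, and since $L$ is an $A$-module and $\xi = a \cdot \zeta$,
\begin{equation*}
a \cdot (h \cdot \zeta) \;=\; h \cdot (a \cdot \zeta) \;=\; h \cdot \xi \;=\; 0 \quad \text{in } M.
\end{equation*}
Now I would apply $\dr$ to this identity. Using the Leibniz-type axiom with $\theta = h \cdot \zeta$ and recalling from \eqref{anchor K} that $\rho_M(h \cdot \zeta) = \rho_L(\zeta)$, one obtains
\begin{equation*}
0 \;=\; \dr\bigl(a \cdot (h \cdot \zeta)\bigr) \;=\; a \cdot \dr(h \cdot \zeta) \,-\, \rho_L(\zeta)(a).
\end{equation*}
Setting $b := \dr(h \cdot \zeta) \in A$ then yields $a \cdot b = \rho_L(\zeta)(a)$, contradicting the standing hypothesis. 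Hence no right $(A,M)$-connection character, and consequently no right $(A,M)$-connection on $A$, can exist.

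The only subtlety I foresee is making sure that $h \cdot \zeta$ is a well-defined input for $\dr$ and that $\rho_M(h \cdot \zeta)$ is unambiguous: this amounts to the fact that two lifts of an element of $M$ differ by an element of $K$, on which $\rho_L$ vanishes by condition (1) of Lemma \ref{quotient LR}. Both points are immediate from the construction of $M$, so the argument is essentially a one-line manipulation of the connection character axiom applied to the relation $a \cdot (h \cdot \zeta) = 0$ forced by $\xi \in K$.
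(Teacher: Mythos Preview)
Your argument is correct and is essentially identical to the paper's own proof: assume a right $(A,M)$-connection character $\delta$ exists, use $\xi \in K$ to get $a \cdot (h \cdot \zeta) = 0$, apply the connection-character axiom together with $\rho_M(h\cdot\zeta)=\rho_L(\zeta)$, and set $b := \delta(h\cdot\zeta)$ to obtain the forbidden solution of $a \cdot b = \rho_L(\zeta)(a)$. The additional remarks you make about well-definedness are fine but not needed, since $h\cdot\zeta$ is simply an element of $M$ and $\rho_M$ is already known to be well defined from Lemma~\ref{quotient LR}.
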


\begin{proof}
First note that since $\xi \in K$ we have $h \cdot \xi = a \cdot h \cdot \zeta = 0$. Now assume there exists a right $(A, M)$-connection character  $\delta: M \rightarrow A$ on $A$. Then we have   $$  0 = \delta ( a \cdot h \cdot \zeta ) = a \cdot \delta ( h \cdot \zeta) - \rho_M ( h \cdot \zeta ) ( a ) = a \cdot \delta ( h \cdot \zeta ) - \rho_L ( \zeta ) ( a )  $$ which is a contradiction. 
\end{proof}

\subsection{The $A$-module $A h \otimes  A$ for a Jacobi algebra $A$}
\label{frak J}

 Our aim in this section is to  endow the $A$-module $A h \otimes  A$, where $(A , \{ \bullet , \bullet \}_J)$ is a  Jacobi algebra over a field $k$ and $h  \in A$,   with a Lie bracket that is compatible with the  bracket on $\mathcal J^1(A)$ in \eqref{equation: Bracket J}.

\begin{lemma}
Let $(A , \{ \bullet , \bullet \}_J ) $  be  a Jacobi algebra over a field $k$, and let $h \in A$ be such that  $\{ \bullet , \bullet \}_J$ satisfies   $r  \cdot \{ \bullet  , \bullet \}_J = 0$  for all $r \in  \mathrm{Ann}_A( \{ h \} )      $. Then $( A, Ah \otimes  A )$ admits a Lie--Rinehart algebra structure with Lie  bracket on $Ah \otimes  A$ given by 
\begin{equation}
\label{bracket AhA}
 [ h \cdot a \otimes  f , h \cdot b \otimes  g ]_{Ah \otimes  A}: = h \cdot [ a \otimes  f , b \otimes  g ]_{ A \otimes  A}
\end{equation}
 and anchor
\begin{equation}
\label{anchor AhA}
\rho_{Ah \otimes  A} ( h \cdot a \otimes  f) =  \rho_{A \otimes  A} ( a \otimes  f ) = a \cdot \Phi_f.
\end{equation}
\end{lemma}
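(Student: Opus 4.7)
The plan is to derive this statement from Lemma \ref{quotient LR} applied to the Lie--Rinehart algebra $(A, A \otimes A)$ of Lemma \ref{AA} with the scalar $h \in A$. The image $M := h \cdot (A \otimes A)$ equals $Ah \otimes A$ (by commutativity of $A$), and Lemma \ref{quotient LR} then produces on $M$ exactly the bracket and anchor stated in \eqref{bracket AhA} and \eqref{anchor AhA}. It therefore suffices to check the two conditions in Lemma \ref{quotient LR}: (i) $\rho_{A \otimes A}(\xi) = 0$ for all $\xi \in K := \ker(\mu_h)$, and (ii) $K$ is a Lie ideal in $(A \otimes A, [\bullet, \bullet]_{A \otimes A})$.

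To verify (i), I would first describe $K$ explicitly. Since $A$ is a $k$-vector space, $A \otimes A = A \otimes_k A$ is \emph{free} as a left $A$-module (choose any $k$-basis on the second tensorand), whence
\begin{equation*}
K \;=\; \mathrm{Ann}_A(\{h\}) \otimes A,
\end{equation*}
i.e., $K$ consists of sums $\sum_i r_i \otimes f_i$ with each $r_i \in \mathrm{Ann}_A(\{h\})$. Condition (i) is then immediate from the standing hypothesis:
\begin{equation*}
\rho_{A \otimes A}\Bigl(\sum_i r_i \otimes f_i\Bigr) \;=\; \sum_i r_i \Phi_{f_i} \;=\; \sum_i \bigl(r_i \{f_i, \bullet\}_J + r_i \{1, f_i\}_J \cdot \bullet\bigr) \;=\; 0,
\end{equation*}
since each $r_i \cdot \{\bullet, \bullet\}_J$ vanishes by assumption.

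For condition (ii), by bilinearity it suffices to treat a simple generator $r \otimes f \in K$ (so $hr = 0$) and an arbitrary $b \otimes g \in A \otimes A$. Formula \eqref{bracketAA} gives
\begin{equation*}
h \cdot [r \otimes f, b \otimes g]_{A \otimes A} \;=\; hrb \otimes \{f, g\}_J + hr \Phi_f(b) \otimes g - hb \Phi_g(r) \otimes f.
\end{equation*}
The first two summands vanish because $hr = 0$. For the third, expanding $\Phi_g(r) = \{g, r\}_J + r\{1, g\}_J$, the $hbr\{1, g\}_J$ piece again vanishes, and what remains is $hb\{g, r\}_J \otimes f$. This is the crux of the argument: I need the identity $h\{g, r\}_J = 0$, which I would obtain by applying $\{g, \bullet\}_J$ to the relation $hr = 0$. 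Using the Leibniz rule \eqref{equation: generalized leibniz} together with antisymmetry,
\begin{equation*}
0 \;=\; \{g, hr\}_J \;=\; h\{g, r\}_J + r\{g, h\}_J + hr\{1, g\}_J;
\end{equation*}
the term $r\{g, h\}_J$ vanishes by the standing hypothesis on $r \in \mathrm{Ann}_A(\{h\})$ and $hr\{1, g\}_J$ vanishes from $hr = 0$, so that $h\{g, r\}_J = 0$ as required.

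The main (and only substantive) obstacle is precisely this last step: recognising that the hypothesis $r \cdot \{\bullet, \bullet\}_J = 0$ propagates from $r$ to $\{g, r\}_J$ after multiplication by $h$, via the Leibniz rule applied to $hr = 0$. Once (i) and (ii) are established, Lemma \ref{quotient LR} delivers the Lie--Rinehart structure on $(A, Ah \otimes A)$ with precisely the claimed bracket \eqref{bracket AhA} and anchor \eqref{anchor AhA}.
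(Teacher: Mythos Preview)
Your proposal is correct and follows essentially the same route as the paper: both apply Lemma~\ref{quotient LR} to $(A, A\otimes A)$, identify $K=\mathrm{Ann}_A(\{h\})\otimes A$ via freeness of $A\otimes A$ over $A$ (the paper phrases this by fixing a $k$-basis $\{e_i\}$ of $A$), and both isolate the same crux for condition~(ii), namely deducing $h\cdot\{g,r\}_J=0$ from the Leibniz rule applied to $hr=0$ together with the hypothesis $r\cdot\{\bullet,\bullet\}_J=0$.
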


\begin{proof}
First recall that $(A , A \otimes  A)$ is a Lie--Rinehart algebra with Lie bracket  given in \eqref{bracketAA}   and anchor given in \eqref{anchorAA}.  Note also that there exists a map of $A$-modules given by  
\begin{equation*}
\mu_h  : A \otimes  A  \longrightarrow Ah \otimes  A , \quad 1 \otimes  a \longmapsto h \otimes  a.
\end{equation*}
We now prove that the conditions  in Lemma \ref{quotient LR} are satisfied. Let  $\{ e_i \}$ be a basis of the Jacobi algebra $(A, \{ \bullet , \bullet \}_J )$ as a $k$-vector space, and  let $K = \{ \sum a_i \otimes  e_i \in A \otimes  A \mid \sum a_i \cdot h \otimes   e_i = 0\}$. Now, let $ \sum a_i \otimes  e_i \in K$. Then $ a_i \cdot h = 0$ so $a_i \in \mathrm{Ann}_A ( \{ h \} )$ for all $i$.

\begin{enumerate}

\item Since $r \cdot \{ \bullet , \bullet \}_J = 0$ for all $r  \in \mathrm{Ann}_A ( \{ h \})$, we have $r  \cdot \Phi_{ \bullet } = 0$. Then, for elements $\sum a_i \otimes_R e_i \in K$ we have $ \rho_{A \otimes  A } ( \sum a_i \otimes  e_i ) = \sum a_i \cdot \Phi_{e_i} = 0$ since  $a_i \in \mathrm{Ann}_A ( \{ h \})$.

\item  
Since $\{ h \cdot r , \bullet \}_J = r \cdot \{ \bullet , \bullet \}_J = 0$,  the Leibniz rule       in 
\eqref{equation: generalized leibniz}  yields $ h \cdot \{ r , \bullet  \}_J =0 $  for all $r \in \mathrm{Ann}_A ( \{ h \})$. Furthermore, by computing the  Lie bracket on $A \otimes  A$  in \eqref{bracketAA} we obtain, for  $\sum a_i \otimes e_i \in K$,  $\sum b_j \otimes e_j \in A \otimes A$:
\begin{equation*}
\begin{split}
 \left[ \sum a_i \otimes  e_i , \sum b_j \otimes  e_j \right]_{A \otimes  A} & = \sum \sum a_i \cdot b_j \otimes  \{ e_i ,  e_j \}_J \\
 &  \phantom{={}} + \sum \sum a_i \cdot \Phi_{e_i} ( b_j) \otimes  e_j \\
 & \phantom{={}} - \sum \sum b_j \cdot \Phi_{e_j} ( a_i) \otimes  e_i
\end{split} 
\end{equation*}
 hence we deduce  $\left[ \sum a_i \otimes  e_i , \sum b_j \otimes  e_j \right]_{A \otimes  A} \in K$ so  that    $K$ is a Lie ideal in $(A \otimes  A , [ \bullet , \bullet ]_{A \otimes  A}$).

\end{enumerate}

Hence,  by Lemma \ref{quotient LR}, the pair  $(A, A h \otimes  A)$ can be endowed with a Lie--Rinehart algebra structure with anchor $\rho_{Ah \otimes   A} ( h \cdot a \otimes  f) =  \rho_{A \otimes  A} ( a \otimes  b ) = a \cdot \Phi_b$ and Lie bracket on $Ah \otimes  A$ given by  $ [ h \cdot a \otimes  f , h \cdot b \otimes  g ]_{Ah \otimes  A} = h \cdot [ a \otimes  f , b \otimes  g ]_{A \otimes  A}$. 
\end{proof}

The compatibility between  the Lie--Rinehart algebra structures in $(A , A \otimes   A)$, $(A , \mathcal J^1 (A))$ and $(A , A h \otimes  A)$ can be described using  the following commutative diagram: 
 \begin{displaymath}
 \xymatrix{
 & A \ar[dl]_{ \pmod{ I^2}} \otimes  A  \ar[dd]^{ \rho_{ A \otimes  A} } \ar[dr]^{ \mu_h } & \\
 \mathcal J^1 (A)  \ar[dr]_{\rho_{ \mathcal J^1   } }  &   & A h \otimes  A \ar[dl]^{\rho_{A h \otimes  A} }\\
 & \mathrm{Der}_k (A) &  }
 \end{displaymath}

\subsection{Right $(A, Ah \otimes  A)$-connections on $(A, \{ \bullet , \bullet \}_J )$. Proof of Theorem \ref{main}}

Throughout this section we assume $A$ is a Jacobi algebra over a field $k$, $h \in A$ and  $r \cdot \{ \bullet  , \bullet \}_J = 0$ for all $r \in \mathrm{Ann}_A( \{ h \})$.

\begin{lemma}
\label{twist}
A $k$-linear map $ \varphi_h : A h \otimes  A \rightarrow A$ is a  right $(A, A h \otimes  A )$-connection  on $A$ if and only if it is of the form
\begin{equation}
\label{j connection}
\varphi_h : A h \otimes  A \longrightarrow A, \quad a \cdot h \otimes  b \longmapsto a \cdot \dt  (b)  + \{  a , b \}_J  -  a \cdot \{ 1 , b \}_J
\end{equation}
where   $ \dt : A \rightarrow A$ satisfies:
\begin{equation}
\label{very important 2}
r  \cdot \dt (a) - \{ a , r \}_J = 0, \quad \forall r \in \mathrm{Ann}_A ( \{ h \}), \forall a \in A .
\end{equation}
In terms of  $\dt $, the curvature of a right $(A, Ah \otimes  A )$-connection on $A$ is  
%\begin{small}
\begin{equation}
\label{equation: curv second}
\Cr  \left( h \otimes  a , h \otimes  b  \right) =   \dt  \{  a , b \}_J  - \{ a , \dt (b)  \}_J  -   \{  \dt (a) , b  \}_J    - \{ 1 , a \}_J \cdot  \dt (b)   +  \dt  (a) \cdot \{  1 , b \}_J.
\end{equation}
%\end{small}
\end{lemma}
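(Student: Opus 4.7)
The plan is to reduce the statement, via Proposition~\ref{proposition: connection characters}, to characterising right $(A, Ah \otimes A)$-connection \emph{characters} on $A$, and then to exploit the identification $Ah \otimes A \cong (A \otimes A) / (\mathrm{Ann}_A(\{h\}) \otimes A)$ of $k$-vector spaces so that well-definedness becomes a pointwise algebraic constraint on a single map $\mathfrak{D} : A \to A$.

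First I would set $\mathfrak{D}(b) := \varphi_h(h \otimes b)$. Any right connection character satisfies the Leibniz rule \eqref{equation: right connection character axi} with respect to the anchor $\rho_{Ah \otimes A}$ of \eqref{anchor AhA}, so applying it to $a \cdot (h \otimes b) = ah \otimes b$ and expanding $\Phi_b(a) = \{b,a\}_J + a \cdot \{1,b\}_J$ yields precisely the form \eqref{j connection}. Conversely, given any $\mathfrak{D}$ satisfying \eqref{very important 2}, the formula \eqref{j connection} defines a $k$-linear map, and a short computation shows that the Leibniz rule \eqref{equation: right connection character axi} holds, so that $\varphi_h$ is a right connection character.

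For the necessity of \eqref{very important 2}, the point is that $\mu_h : A \otimes A \to A \otimes A$ has kernel $\mathrm{Ann}_A(\{h\}) \otimes A$, so the formula \eqref{j connection}, read on lifts $a \in A$, descends to $Ah \otimes A$ if and only if it vanishes whenever $a = r \in \mathrm{Ann}_A(\{h\})$. The standing assumption $r \cdot \{\bullet,\bullet\}_J = 0$ kills the term $r \cdot \{1,b\}_J$, and skew-symmetry turns the surviving condition $r \cdot \mathfrak{D}(b) + \{r,b\}_J = 0$ into exactly \eqref{very important 2}. The curvature formula \eqref{equation: curv second} then follows by specialising the general identity \eqref{equation: C_r 1} of Lemma~\ref{theorem: A-linear} at $\xi = h \otimes a$, $\zeta = h \otimes b$, using $\rho_{Ah \otimes A}(h \otimes a) = \Phi_a$, $\varphi_h(h \otimes b) = \mathfrak{D}(b)$, and the bracket $[h \otimes a, h \otimes b]_{Ah \otimes A} = h \otimes \{a,b\}_J$ from \eqref{LL}, followed by expanding $\Phi_a(c) = \{a,c\}_J + c \cdot \{1,a\}_J$ and applying skew-symmetry of the Jacobi bracket.

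The main obstacle I anticipate is the well-definedness step: one must correctly view $Ah \otimes A$ as a quotient of $A \otimes A$ and then exploit the hypothesis $r \cdot \{\bullet,\bullet\}_J = 0$ to eliminate the Leibniz-correction term $r \cdot \{1,b\}_J$, so that the constraint on $\mathfrak{D}$ takes the clean form \eqref{very important 2} rather than a more entangled family of identities coupling $\mathfrak{D}$ with $\{1,\bullet\}_J$.
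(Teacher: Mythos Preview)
Your proposal is correct and follows essentially the same route as the paper: define $\mathfrak{D}(b)=\varphi_h(h\otimes b)$, derive the form \eqref{j connection} from the connection-character Leibniz rule \eqref{equation: right connection character axi}, obtain the constraint \eqref{very important 2} from well-definedness on $r\cdot h\otimes b=0$ (using $r\cdot\{1,b\}_J=0$), and read off the curvature from Lemma~\ref{theorem: A-linear}. The only cosmetic difference is that you package the well-definedness step as the quotient identification $Ah\otimes A\cong (A\otimes A)/(\mathrm{Ann}_A(\{h\})\otimes A)$, whereas the paper simply evaluates $\varphi_h(r\cdot h\otimes a)=0$ directly; the content is identical.
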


\begin{proof}
We start by recalling that $A h \otimes  A$ is generated as an $A$-module by elements $h \otimes  a \in  Ah \otimes  A$,  $a \in A$. Let   $\varphi_h$ be a right $(A, A h \otimes  A )$-connection character on $A$, then it satisfies 
\begin{equation*}
\varphi_h : A h \otimes  A \longrightarrow A , \quad  a \cdot h \otimes  b   \longmapsto  a \cdot \varphi_h \left( h \otimes  b \right)   -  \rho_{ A h \otimes  A} \left( h \otimes  b \right) (a).
\end{equation*}
Let  $ \dt (a) := \varphi_h  \left( h \otimes  a \right), \forall a \in A$. Then we have
\begin{equation}
\label{well}
0 = \varphi_h  \left( r \cdot h \otimes  a \right) = r \cdot \dt (a) - \{ a , r \}_J - \{ 1 , a \}_J \cdot r =  r  \cdot \dt (a) - \{ a , r \}_J  .
\end{equation}

We now prove the converse statement:
\medskip

A $k$-linear  map $\varphi_h : A h \otimes  A \rightarrow A$ given by \ $a \cdot h \otimes  b \mapsto a \cdot \dt (b) + \{ a , b \}_J - a \cdot \{ 1 , b \}_J$, where $\dt$ satisfies \eqref{very important 2}, is a right $(A , A h \otimes  A)$-connection on $A$.  Note that  $\varphi_h$ is well-defined by \eqref{well}. 
Let $\sum  a_i \cdot h \otimes  b_i  \in A h \otimes  A$, then
\begin{align*}
 \varphi_h & \left( \sum  c \cdot a_i \cdot h \otimes  b_i \right)   = \sum c \cdot a_i \cdot \dt (b_i) + \sum  \{ c \cdot a_i , b_i \}_J - \sum  c \cdot a_i \cdot \{ 1 , b_i \}_J \\
 & = c \cdot \sum a_i \cdot \dt ( b_i ) + c \cdot \sum \{ a_i , b_i \}_J - \sum a_i \cdot \{ b_i , c \}_J \\
 & \phantom{={}}  - \sum c \cdot a_i \cdot \{ 1 , b_i \}_J - \sum c \cdot a_i \cdot \{ 1 , b_i \}_J\\
& = c \cdot \varphi_h \left( \sum  a_i \cdot h \otimes  b_i \right) - \rho_{ A h \otimes  A} \left( \sum a_i \cdot h \otimes  b_i \right) ( c ) 
\end{align*}
so $\varphi_{ h}$ satisfies \eqref{equation: right connection character axi}. Lastly, the expression for the curvature in  \eqref{equation: curv second} follows  directly from Lemma \ref{theorem: A-linear}. 
\end{proof}

\begin{remark}
\label{ann} 
Assume right $(A , Ah \otimes A)$-connections exist on $A$, and let $\mathcal D$ be the (non-empty) set of maps $\dt: A \rightarrow A$ satisfying \eqref{very important 2} for all $a \in A$. Since  maps $\dt_1  , \dt_2  \in \mathcal D$  satisfy $$r  \cdot \dt_1 (a) - r \cdot \dt_2 (a) = \{a , r \}_J - \{ a , r \}_J = 0 \quad \textnormal{for all $a \in A$},$$  we deduce  $\dt_1 (a) - \dt_2 (a)  \in \bigcap_{r \in \mathrm{Ann}_A ( \{ h \} ) } \mathrm{Ann}_A ( r) =: H$, hence it follows that   the set  $\mathcal D$   is an affine space over $\mathrm{Lin}_k ( A , H )$. 

\medskip

Furthermore, from  \eqref{j connection} we see that right  $(Ah \otimes A, A)$-connections  on $A$ are determined by maps $\dt \in \mathcal D$ so that given two connections $\varphi_h$ and $\varphi_h'$ we have $r \cdot ( \varphi_h - \varphi_h') (a) = 0$ for all $a \in A$.  As before,  we deduce that the set of right  $(A , Ah \otimes A)$-connections on $A$ is an affine space over $\mathrm{Lin}_k ( A , H)$. 

\end{remark}

\begin{proof}[Proof of Theorem \ref{main}] We now prove Parts 2 (a) and 2 (b):
\medskip

 To prove Part 2 (a), take $ \xi = r \otimes  1$ where $r \in \mathrm{Ann}_A ( \{ h \} )$, $\zeta = 1 \otimes  1 $ and $a = r$ in Lemma \ref{fundamental}, so we  deduce that  if there exists a right $( A , A h \otimes  A )$-connection on $A$, not necessarily flat, there  must exist  some $b \in A$ such that $r \cdot b = \{ 1 , r \}_J$, proving the claim. Note that  taking $a = 1$ in \eqref{very important 2} yields  $ 0  = r \cdot \dt \left( 1 \right) - \{ 1 , r \}_J$ for all $r  \in \mathrm{Ann}_A ( \{ h \} )$.

\medskip

For all $a \in A$, we denote by   $S_a $ the set $ \{ s \in A \mid r \cdot s  =   \{ a , r \}_J , \forall r \in \mathrm{Ann}_A ( \{ h\} ) \}$ of solutions of \eqref{very important 2}. 

\medskip

To prove  Part 2 (b), assume there exist right $(A , Ah \otimes A)$-connections on  $A$ which are flat, i.e.,  $\mathcal D$ is non-empty, so that \eqref{very important 2} has solutions $\dt (a) \in S_a \subset A$ for all $a \in A$ and furthermore,  $\Cr \left( h \otimes  f  , h \otimes  g  \right) = 0$ for all $f,  g  \in A$. Let $b \in A$ satisfy  $ \{ 1 , b \}_J = 0$, then by  \eqref{equation: curv second} we have
\begin{equation}
\label{C 1 y}
0 =  \Cr \left( h \otimes  1 , h \otimes  b \right) = - \{ 1 ,   \dt (b)  \}_J - \{  \dt (1)   , b \}_J .
\end{equation}
Hence if there exists no $c  \in A$ satisfying  $ \{ 1 , c \}_J =  \{ b ,  s  \}_J  $ for some $ s \in S_1$,  then there exists no such  map  $ \dt : A \rightarrow A$, and  hence there exists no flat right $(A, A h \otimes  A )$-connection on $A$. 
\end{proof}

\begin{corollary}
If $\textnormal{Ann}_A (h) = 0$,  the map $\dt: A \rightarrow A$ given by $a \mapsto \{ 1 , a \}_J$ satisfies \eqref{very important 2} and  hence induces the right  $( A ,  Ah \otimes A)$-connection   $\varphi : Ah \otimes A \rightarrow A$ on $A$ given by $a \cdot h \otimes b \mapsto \{ a , b \}_J$ which is shown to be flat, by a straightforward computation using \eqref{equation: curv second}.
\end{corollary}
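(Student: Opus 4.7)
The plan has three short steps that follow the statement almost verbatim. First, I would note that under the hypothesis $\textnormal{Ann}_A(h) = 0$ the only element of $\textnormal{Ann}_A(\{h\})$ is $0$, so condition \eqref{very important 2} becomes the vacuous identity $0 = 0$; in particular the proposed map $\dt : A \to A$, $a \mapsto \{1, a\}_J$, satisfies it trivially. Note also that the standing hypothesis of the section, $r \cdot \{\bullet,\bullet\}_J = 0$ for all $r \in \textnormal{Ann}_A(\{h\})$, holds for free in this situation, so Lemma \ref{twist} applies.

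Second, substituting $\dt(b) = \{1, b\}_J$ into formula \eqref{j connection} from Lemma \ref{twist} gives
\begin{equation*}
\varphi_h(a \cdot h \otimes b) = a \cdot \{1, b\}_J + \{a, b\}_J - a \cdot \{1, b\}_J = \{a, b\}_J,
\end{equation*}
which is precisely the announced connection. By Lemma \ref{twist}, it is automatically a right $(A, Ah \otimes A)$-connection on $A$ since $\dt$ satisfies \eqref{very important 2}.

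Third, for flatness I would substitute this $\dt$ directly into the curvature formula \eqref{equation: curv second}:
\begin{align*}
\Cr(h \otimes a, h \otimes b) &= \{1, \{a, b\}_J\}_J - \{a, \{1, b\}_J\}_J - \{\{1, a\}_J, b\}_J \\
&\quad - \{1, a\}_J \cdot \{1, b\}_J + \{1, a\}_J \cdot \{1, b\}_J.
\end{align*}
The two multiplicative terms cancel immediately. Using skew-symmetry to rewrite the middle term as $-\{\{1,a\}_J, b\}_J = \{b, \{1,a\}_J\}_J$ and the first difference as $-\{a, \{1,b\}_J\}_J = \{a, \{b,1\}_J\}_J$, the remaining three terms are exactly the Jacobi identity for $\{\bullet,\bullet\}_J$ applied to the triple $(1, a, b)$, hence vanish. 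Since $\Cr$ is $A$-trilinear by Lemma \ref{theorem: A-linear}, checking the vanishing on generators $h \otimes a, h \otimes b$ suffices.

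Overall the proof is purely mechanical; there is no real obstacle, the only genuine content being the Jacobi identity. The hypothesis $\textnormal{Ann}_A(h) = 0$ is used solely to trivialize the obstruction \eqref{very important 2}, and it is this trivialization that allows the most natural choice $\dt(a) = \{1,a\}_J$ to be available.
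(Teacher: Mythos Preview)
Your proposal is correct and follows exactly the approach the paper indicates: the paper gives no detailed proof of this corollary beyond the phrase ``by a straightforward computation using \eqref{equation: curv second}'', and you have carried out precisely that computation, reducing the vanishing of the curvature to the Jacobi identity for the triple $(1,a,b)$. The only minor wording quibble is that you refer to $-\{\{1,a\}_J,b\}_J$ as ``the middle term'' when it is the third of the three surviving terms, but the mathematics is entirely sound.
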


\section{Lie--Rinehart algebras $(A, A h \otimes  A )$ with no antipode}

\label{main proof}

This section is dedicated to provide  examples of Lie--Rinehart algebras $(A, A h \otimes  A )$, constructed as in the previous section. The first of our examples admits no right $(A, Ah \otimes A)$-connections on $A$ while the second one does admit them, although none of them can be flat. 

\subsection{Example with no $(A, Ah \otimes A)$-connections on $A$}

  Let $A = k [ x , y ] / \langle x \cdot y ,  x^2 , y^2 \rangle$, let  $E \in \mathrm{Der}_k (A)$ be a derivation with $E (x) = y$, $ E (y) = 0$, and let $A$ be endowed with the Jacobi bracket  $ \{ a , b \}_J = a \cdot E (b) - E (a) \cdot  b$.

\medskip

Take  $h = y $, then   $\mathrm{Ann}_A ( \{ y \} ) = \mathrm{Span}_k \{ x , y \}$ and we have  $r  \cdot \{ \bullet  , \bullet \}_J = 0$ for all $r \in \mathrm{Ann}_A ( \{ y \} ) $.  Then, by Theorem \ref{main} Part 1,  the pair  $(A, A y \otimes  A  )$ is a Lie--Rinehart algebra with Lie bracket on $A y \otimes  A $  given by  $  \lbrack    y \otimes  f ,    y \otimes  g \rbrack_{ A y \otimes  A }   =  y \otimes  \{ f , g \}_J $   and anchor $\rho_{ A y \otimes  A }: A y \otimes  A  \rightarrow \mathrm{Der}_k (A)$ given by $ y \otimes  a  \mapsto \Phi_a = a  \cdot E ( \bullet ) $.  
Since $x \in \mathrm{Ann}_A ( \{ y \})$ and  there exists no $a \in A$ satisfying $ a \cdot x - \{ 1 , x \}_J = a \cdot x - y  = 0$ we deduce by Theorem \ref{main} Part 2 (a) that the Lie--Rinehart algebra $(A , A y \otimes  A )$ does not admit right $(A,Ay \otimes A)$-connections on $A$. Hence its universal enveloping algebra does not admit an antipode.

\medskip

Alternatively, we can prove this result by noting that $(Ay \otimes  A , [ \bullet , \bullet ]_{Ay \otimes  A})$ is isomorphic to the Heisenberg Lie algebra $H_3(k)$  of dimension $3$, with central element  $y \otimes  y$.  Let $\{ \alpha_1 , \alpha_2 , \alpha_3 \}$ be a basis for $H_3(k)$ with central element $\alpha_3$. Following \cite[Proposition 3.1]{KR1}, we define an $A$-module structure on $H_3(k)$ by $a \cdot \alpha_i := \chi (a) \alpha_i$,  $1 \leq i \leq 3$ where $\chi: A \rightarrow k$ is a character on $A$ given by $\chi ( x) = \chi (y) = 0$, and an anchor map $\rho: H_3(k) \rightarrow \mathrm{Der}_k (A)$ given by $\rho(\alpha_1) = E$, $\rho (\alpha_2) = \rho ( \alpha_3) = 0$. Then,  $(A, H_3 (k))$ is a Lie--Rinehart algebra  isomorphic to $(A, Ay \otimes  A)$. A similar argument as in the proof of  \cite[Theorem 1.1]{KR1} yields that there exist no right $(A, H_3(k))$-connections on $A$.

\subsection{Example with non--flat $(A, Ah \otimes A)$-connections on $A$}

Let $k= \mathbb Z_2$ and let $A = \mathbb Z_2 [ x, y , z ] / \langle x^4, y^6, z^2,  x \cdot y^4 , x^3 \cdot y , x^3 \cdot z    \rangle$. A basis for $A$ as $\mathbb Z_2$-module is:
\begin{equation}
\begin{multlined}\label{basis}
1, x , x^2 , x^3,  y, y^2, y^3, y^4, y^5,  z ,  x \cdot y, x^2 \cdot y , x \cdot y^2, x \cdot y^3 ,  x^2 \cdot y^2, x^2 \cdot y^3 ,  x  \cdot z ,  \\ x^2 \cdot z   ,    y \cdot z  ,    y^2 \cdot z , y^3 \cdot z , y^4 \cdot z , y^5 \cdot z , x \cdot y \cdot z , x^2 \cdot y \cdot z , x \cdot y^2 \cdot z ,  x \cdot y^3 \cdot z , \\  x^2 \cdot y^2 \cdot z, x^2 \cdot y^3 \cdot z .
\end{multlined}
\end{equation}
 Let    $E, F \in \mathrm{Der}_R (A)$ be derivations with $ E (x) = E (z) = x^2$ , $E (y)=0$ and $ F (x) = F(z) = 0$, $ F ( y ) =  z$. 
Then the images of  $E, F \in \mathrm{Der}_R (A)$  characterized in terms of the basis  for $A$  in   \eqref{basis} are: 
\begin{equation}
\begin{multlined}
\label{vanish}
 \mathrm{Im} ( E )  = \mathrm{Span}_{ \mathbb Z_2} \{  x^2  ,  x^2 \cdot  y, x^2 \cdot y^2, x^2 \cdot y^3,  x^2 \cdot z +   x^3  , \\  x^2 \cdot y \cdot z , x^2 \cdot y^2 \cdot z , x^2 \cdot y^3 \cdot z   \} \subset Ax^2 
\end{multlined}
\end{equation}
and
\begin{equation}
\mathrm{Im} (F)  = \mathrm{Span}_{ \mathbb Z_2} \{ z, y^2 \cdot z , y^4 \cdot z ,    x \cdot z, x^2 \cdot z , x \cdot y^2 \cdot z , x^2 \cdot y^2 \cdot z    \}
\end{equation}
 Hence we deduce that  
$$
\mathrm{Im} ( F \circ E )  = \mathrm{Span}_{ \mathbb Z_2} \{  x^2 \cdot z , x^2 \cdot y^2 \cdot z   \} \subset Ax^2 \cdot z $$
and 
\begin{equation*}
 \mathrm{Im} ( E \circ F )  = \mathrm{Span}_{ \mathbb Z_2} \{ x^2 , x^2 \cdot y^2 ,  x^2 \cdot z + x^3   , x^2 \cdot y^2 \cdot z   \} \subset Ax^2 
\end{equation*}
so   $ E (a) \cdot F ( E (b) ) \in A$, $E (a) \cdot E ( F(b) ) \in A$ vanish for all $a , b , c \in A$. 
So $A$ admits  the  Jacobi bracket  
\begin{equation*}
\{ a , b \}_J = E (a) \cdot F (b) - F (a) \cdot E (b) + a \cdot E (b) - E (a) \cdot  b.
\end{equation*}  
To see this, we  note $\{ 1 , \bullet \}_J = E$; next we check that   $ \{ \bullet , \bullet  \}_J$ satisfies the Leibniz rule in   \eqref{equation: generalized leibniz}:
\begin{align*}
\{ a , b \cdot c \}_J & = E (a) \cdot F ( b \cdot c ) - F (a) \cdot E (b \cdot c ) +  a  \cdot E ( b \cdot c ) - E (a) \cdot  b \cdot c \\
 & = b \cdot \{ a , c \}_J  + c \cdot \{ a , b \}_J + \{ 1 , a \}_J \cdot b \cdot c;
\end{align*}
and finally we check that $\{ \bullet  , \bullet \}_J$ satisfies the Jacobi identity:
\begin{small}
\begin{align*}
&  \{ a , \{ b , c \}_J \}_J + c.p.   =   E(a) \cdot F ( E(b) \cdot F(c) - F(b) \cdot E(c) +  b  \cdot E(c) - E(b) \cdot c ) \\
 &  \phantom{={}}  - F(a) \cdot E( E(b) \cdot F(c) - F(b) \cdot E(c) + b  \cdot E(c) - E(b) \cdot  c ) \\
 &  \phantom{={}} +  a \cdot E( E(b) \cdot F(c) - F(b) \cdot E(c)+  b \cdot E(c) - E(b) \cdot  c ) \\
 & \phantom{={}}  - E(a) \cdot ( E(b) \cdot F(c) - F(b) \cdot E(c) +  b \cdot E(c) - E(b) \cdot c ) + c.p.  \\
  & = E(a) \cdot F( E(b) ) \cdot F(c)   - E(a) \cdot F(b) \cdot F( E(c) )   + E(a) \cdot b \cdot F( E(c) )  - E(a) \cdot c \cdot F ( E(b) )  \\
 & \phantom{={}}  - F (a) \cdot E(b) \cdot E( F(c)) + F(a) \cdot E( F(b) ) \cdot E(c) + a \cdot E(b) \cdot E( F(c) )  - a \cdot E ( F(b) ) \cdot E(c) + c.p. \\
 & \phantom{={}}  = 0
\end{align*}
\end{small} 
\vspace{-11pt}
\noindent Now, taking $h = y^2$, we characterize  $  \mathrm{Ann}_A ( \{ y^2 \}) $  in terms of the basis for $A$ given  in \eqref{basis} as
\begin{equation} 
\label{ann basis}
\begin{multlined}
 \mathrm{Ann}_A ( \{ y^2 \} )  = \mathrm{Span}_{\mathbb Z_2} \left\{ x^3  , y^4 , y^5 ,   x \cdot y^2 , x \cdot y^3 , x^2 \cdot y^2 ,   x^2 \cdot y^3 , \right. \\
\left. y^4 \cdot z , y^5 \cdot z ,    x \cdot y^2 \cdot z , x \cdot y^3 \cdot z ,  x^2 \cdot y^2 \cdot z , x^2 \cdot y^3 \cdot z \right\}
 \end{multlined}
\end{equation}
 so that $r \cdot \{ \bullet , \bullet \}_J = 0$ for all $r \in \mathrm{Ann}_A ( \{ y^2 \})$. 
Hence, we deduce that  $ ( A , A y^2 \otimes_{ \mathbb Z_2} A)$ is a Lie--Rinehart algebra   with anchor map $\rho_{  Ay^2 \otimes_{\mathbb Z_2} A} : A y^2 \otimes_{\mathbb Z_2} A \rightarrow \mathrm{Der}_R (A) $  given by $ y^2 \otimes_{\mathbb Z_2} a \mapsto \Phi_a $ and   Lie bracket on the $A$-module $Ay^2 \otimes_{ \mathbb Z_2} A$  given by   $\lbrack   y^2  \otimes_{ \mathbb Z_2} f,  y^2 \otimes_{ \mathbb Z_2} g  \rbrack_{ A y^2 \otimes_{ \mathbb Z_2} A }   = y^2 \otimes_{\mathbb Z_2 } \{ f , g \}_J  $. %    

\medskip

We now show that right $(A, Ay^2  \otimes_{\mathbb Z_2} A)$-connections do exist on $(A , \{ \bullet , \bullet \}_J )$. A straightforward computation using the characterization of $\mathrm{Ann}_A ( \{ y^2 \} )$ given in \eqref{ann basis} shows that     $x \in A$ satisfies 
\begin{equation}
\label{D1} 
r \cdot x = \{ 1 , r \}_J 
\end{equation}
 for all $r  \in \mathrm{Ann}_A (\{ y^2 \})$.   It is now straightforward to check that the map     $\dt: A \rightarrow A$ given by $a \mapsto - x \cdot F ( a ) + x \cdot a $ for all $a \in A$, satisfies the condition given in \eqref{very important 2} since $E(a) \cdot ( F(r) - r ) = 0$ for all $a \in A$ and $ r \in \textnormal{Ann}_A ( \{h \})$ so that
\begin{align*}
r \cdot \dt (a) &= r  \cdot ( - x \cdot F ( a ) + x \cdot a ) = - E (r  ) \cdot F (a) + E(r ) \cdot a \\
& = E(a) \cdot F(r  ) - E(r ) \cdot F(a) + a \cdot E(r ) - r  \cdot E(a) = \{ a , r  \}_J 
\end{align*}
for all $r  \in \mathrm{Ann}_A ( \{ y^2 \})$.   Hence, by Lemma \ref{twist}, the map
\begin{equation}
\varphi: A y^2 \otimes_{ \mathbb Z_2} A \longrightarrow A, \quad a \cdot y^2 \otimes_{ \mathbb Z_2} b \longrightarrow - a \cdot x  \cdot F(b)  + a \cdot b \cdot x + \{ a , b \}_J - a \cdot \{ 1 , b \}_J
\end{equation}
is a right $(A, A y^2 \otimes_{\mathbb Z_2} A )$-connection character on $A$.

\medskip

We now prove that none of the right $(A, Ay^2  \otimes_{\mathbb Z_2} A)$-connections on $A$ is flat. First note    $$ H = \bigcap_{r \in \mathrm{Ann}_A (\{ y^2 \} )} \mathrm{Ann}_A (r) =  x^2 A + y^2 A $$
that is
\begin{align*}
H =   \mathrm{Span}_{\mathbb Z_2 }  (  & x^2 , x^3 , x^2 \cdot y, x^2 \cdot y^2, x^2 \cdot y^3, x^2 \cdot z ,  x^2 \cdot y \cdot z  , x^2 \cdot  y^2 \cdot z , \\
 &  x^2 \cdot y^3 \cdot z , y^2 ,  x \cdot y^2 ,  y^3, y^4, y^5, y^2\cdot z ,  x \cdot y^3 ,   x  \cdot y^2 \cdot z    , \\
  &   y^3 \cdot z  ,    y^4 \cdot z , y^5 \cdot z ,    x \cdot y^3 \cdot z             ).
\end{align*}
 Then, from Remark \ref{ann} and \eqref{D1} we deduce that the only solutions of    the equation $a \cdot r   = \{ 1 , r \}_J$, for all $r \in \mathrm{Ann}_A ( \{ y^2 \} )$,  are  elements $a = x +  \alpha$ for all $\alpha \in H $. Since  $\{ 1 , y \}_J = E \left( y \right) = 0$, we can take $a = x + \alpha$ and $b = y$ in Theorem \ref{main}, and  compute
\begin{align*}
\{ a  , b \}_J & =  \left\{   x + \alpha , y \right\}_J   =      x^2 \cdot  z - x^2 \cdot y      + E( \alpha ) \cdot F(y) - y \cdot E( \alpha )  \\
 & = x^2 \cdot z - x^2 \cdot y +  \lambda_1 \cdot  x^2 \cdot y^2 \cdot z + \lambda_2 \cdot  x^2 \cdot y^3 + \lambda_3 \cdot  x^2 \cdot y^3 \cdot z 
\end{align*}
for $\lambda_1, \lambda_2, \lambda_3 \in \mathbb Z_2$, which by  \eqref{vanish} is not in the image of $E$.  Hence  there exists no $c \in A$ satisfying $\{a , b \}_J = \{ 1 , c \}_J$ and by Theorem \ref{main} Part 2 (b),   we find  that $A$ is not a  right  $( A , A y^2 \otimes_{ \mathbb Z_2} A )$-module.

\bibliographystyle{alpha}

\end{document}